\documentclass[11pt,reqno]{amsart}

  \usepackage[margin=1in]{geometry}
 \usepackage{amsmath,amssymb}
 \usepackage{algpseudocode}
 \usepackage{algorithm}
 \usepackage{algorithmicx}
 \usepackage{caption}
 \usepackage{subcaption}
 \usepackage{amsthm}
 \usepackage{esint}
 \numberwithin{equation}{section}
 \usepackage{amsfonts}
 \usepackage{graphicx}
 \usepackage{hyperref}
\usepackage{enumitem}
 \usepackage{makecell}
 \usepackage{longtable}

 \usepackage{forest}
 \usepackage{comment}

\usepackage{amsmath}
\usepackage{amssymb}
\usepackage{amsthm}
\usepackage{amsmath}
\usepackage{setspace}
\usepackage{xcolor}
\usepackage{fancyhdr}
\usepackage{amssymb}
\usepackage{amsthm}
\usepackage{listings}
    \usepackage{cite}
    \usepackage{tabularx}
    \usepackage{graphicx}
    \usepackage{epstopdf}    
    \usepackage{epsfig}
    \usepackage{float}
    \usepackage{ listings}
    \usepackage{appendix}
 \theoremstyle{plain}

 \newtheorem{theorem}{Theorem}[section]
 
 \newtheorem{lemma}[theorem]{Lemma}
 
 \theoremstyle{definition}

 \newtheorem{remark}[theorem]{Remark}

 \let\pa=\partial
 \let\al=\alpha
 \let\b=\beta
 
 \let\g=\gamma
 \let\e=\varepsilon

 \let \kp = \kappa
 \let\lam=\lambda

 \let\f=\frac
 \let \les = \lesssim
  
 \let\om=\omega
 
 \let \th = \theta

 \let \vp = \varphi
 
\let\B = \Big
 \let\D=\Delta

 \let\td = \widetilde
 
 \let\wh=\widehat
 
 \let \olin = \overline

 \let\teq \triangleq
 
 \let\pa=\partial
 
 \let \vs = \vspace

 \def\cE{{\mathcal E}}
 \def\cF{{\mathcal F}}

 \def\cK{{\mathcal K}}
 \def\cL{{\mathcal L}}
 
 \def\cN{{\mathcal N}}

 \def\cR{{\mathcal R}}

 \def\cN{{\mathcal N}}

 \def\na{\nabla}

\def\one{\mathbf{1}}

 \newcommand{\bseq}{\begin{subequations}}
 \newcommand{\eseq}{\end{subequations}}

 \newcommand{\beq}{\begin{equation}}
 \newcommand{\eeq}{\end{equation}}
  \newcommand{\bal}{\begin{aligned} }
  \newcommand{\eal}{\end{aligned}}
  \newcommand{\bit}{\begin{itemize} }
  \newcommand{\eit}{\end{itemize}}
    \newcommand{\bga}{ \begin{gathered} }
  \newcommand{\ega}{ \end{gathered} }
 \newcommand{\ben}{\begin{eqnarray}}
 \newcommand{\een}{\end{eqnarray}}
 \newcommand{\beno}{\begin{eqnarray*}}
 \newcommand{\eeno}{\end{eqnarray*}}

 \newcommand{\uu}{\mathbf{u}}

 \newcommand{\xx}{\mathbf{x}}
 \newcommand{\yy}{\mathbf{y}}
\newcommand{\zz}{\mathbf{z}}

 \newcommand{\bbb}{\mathbf{b}}

 \newcommand{\R}{\mathbb{R}}

 \newcommand{\Id}{\mathrm{Id}}

\author{Jiajie Chen}
\thanks{Department of Mathematics, University of Chicago, Chicago, IL 60637. Email: jiajiechen@uchicago.edu}
\author{Thomas Y. Hou}
\thanks{Applied and Computational Mathematics, Caltech, Pasadena, CA 91125. Email:hou@cms.caltech.edu.}
 \date{}

\title[Analytic finite-rank corrections in 3D Euler Blowup]{Analytic finite-rank corrections 
for singularly weighted estimates in a computer-assisted proof of 3D Euler singularity}
\begin{document}
\maketitle

\begin{abstract}
Computer-assisted proofs of self-similar singularity formation for fluid equations often rely on numerically constructed approximate profiles. One effective approach to establishing stability of perturbations around a numerically constructed profile is to perform weighted energy estimates 
with singular weights near the singularity. 
However, the weighted norms require exact local vanishing conditions that are not automatically preserved by the equations nor the numerical construction. 
In this paper, we review an analytic low-rank correction method 
first developed in \cite{ChenHou2023a,ChenHou2023b} to overcome this difficulty. The numerical step determines coefficients, rigorous bounds, and low-order defect modes in explicit global basis representations, 
while the required vanishing conditions are enforced analytically through low-rank corrections derived from Taylor expansions of the relevant quantities represented in a smooth basis. For completeness, we briefly review the singularly weighted estimates and a quantitative finite-rank perturbation method 
in the 2D Boussinesq / 3D Euler stability argument, where singular weights and the required vanishing order arise. 
Against this background, we formulate the local correction principle in a simplified setting, explain the correction of the residual error in numerical constructions of approximate space-time solutions and the stream function, and discuss its broader applicability to computer-assisted stability analysis for nonlocal PDEs.

\end{abstract}

\section{Introduction}
\label{introduction}

Computer-assisted analysis has become an increasingly important tool in the
study of  nonlinear PDEs. In problems involving
self-similar or nearly self-similar blowup, however, the main challenge is not
merely to construct an approximate blowup profile numerically. 
The fundamental difficulty is to connect the numerical construction with a rigorous stability theory that is strong enough to control the residual error and the nonlinear perturbation around the approximate profile.

The present paper is motivated by the authors' work 
\cite{ChenHou2023a,ChenHou2023b} on computer-assisted proof of nearly
self-similar blowup for the 2D Boussinesq and the 3D axisymmetric
Euler equations with smooth data and boundary. 
We have established the following blowup result, summarized from \cite{ChenHou2023a,ChenHou2023b}.

  \begin{theorem}\label{thm:main}
There exists a family of smooth initial data with finite energy such that the solution of the 3D axisymmetric Euler equations  in the cylinder $r,z \in [0, 1] \times \mathbb{T}$ develops a nearly self-similar singularity in finite time, where $\mathbb{T}$ denotes the periodic torus.
  \end{theorem}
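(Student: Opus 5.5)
The proof follows the strategy of \cite{ChenHou2023a,ChenHou2023b}. We reduce to the 2D Boussinesq system and prove finite-rank-corrected stability of a numerically constructed approximate self-similar profile in dynamic rescaling variables.

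\textbf{Reduction to Boussinesq in rescaled variables.} Near the boundary point $(r,z)=(1,0)$, with $\theta = (ru^\theta)^2$ playing the role of density, the axisymmetric Euler equations are a lower-order perturbation of the 2D Boussinesq system on the half-space. We write the solution as
\[
\om(x,t) = C_\om(\tau)\, \Omega(x/C_l(\tau),\tau), \qquad \theta(x,t) = C_\th(\tau)\, \Theta(x/C_l(\tau),\tau).
\]
The normalization functions $c_\om,c_l$ are fixed by imposing conditions at the origin, for example the vanishing of $\pa_x\Omega(0)$ and of $\pa_x\Theta(0)$. In these variables, self-similar blowup becomes convergence to a steady state $(\bar\Omega,\bar\Theta,\bar c_l,\bar c_\om)$ of the rescaled equations, with $\bar c_\om<0$. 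Because the rescaled domain grows, the Euler corrections (curvature terms, $1/r$ factors) carry small factors $C_l(\tau)$ that decay exponentially in $\tau$. The cutoff of the data to the cylinder and the periodic extension in $z$ contribute only exponentially small errors.

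\textbf{Approximate profile and residual.} We construct $(\bar\Omega,\bar\Theta)$ numerically in a piecewise polynomial / spline basis with an analytic far-field tail. Interval arithmetic then gives rigorous bounds on the residual $\bar F$ of the rescaled equations in a weighted norm. The stability estimates use weights that are singular at the origin, such as $|x|^{-k}$. These require the residual and the perturbation to satisfy exact vanishing conditions at $0$, which the numerical profile does not satisfy exactly. We enforce them with the analytic low-rank correction: Taylor-expand the relevant quantities at $0$ using the smooth basis representation, then subtract finitely many explicit corrections so that the required low-order coefficients vanish exactly. The normalization conditions are preserved by the choice of $c_l,c_\om$.

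\textbf{Linear stability with a finite-rank perturbation.} Linearizing around the profile gives $\pa_\tau W = \cL W + \cN(W) + \bar F$. We split $\cL = \cL_0 + \cK$. Here $\cL_0$ is analyzed by weighted $L^\infty$/$C^{1/2}$-type (or $L^2$) energy estimates with singular weights, which yield a damping estimate $\langle \cL_0 W, W\rangle_\vp \le -\lambda \|W\|_\vp^2$. The remainder $\cK$ contains the nonlocal Biot--Savart contributions from the far field and from non-smooth parts of $\nabla \mathbf{u}$. We approximate $\cK$ by a finite-rank operator $\sum_i a_i(W)\, \hat F_i$, where the modes $\hat F_i$ solve the linearized evolution numerically and are themselves corrected analytically to satisfy the vanishing conditions. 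The error of this approximation is bounded rigorously and is small. Nonlinear terms are quadratic in the same norm, so a bootstrap with small initial perturbation and small residual closes.

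\textbf{Conclusion and main obstacle.} The bootstrap gives global-in-$\tau$ control in the rescaled variables, with $c_\om(\tau)\to\bar c_\om<0$. Hence $\|\om\|_{L^\infty}$ grows like $(T-t)^{-1}$, and the Beale--Kato--Majda criterion gives blowup in finite time $T$ from smooth, finite-energy initial data. The main obstacle is proving the coercive weighted estimate for $\cL_0$ together with a rigorous, sufficiently small bound on the finite-rank approximation error of the nonlocal velocity terms. This is where the singular weights and the exact vanishing conditions are both essential. My first attempt would be to take the weight exponents and the rank of $\cK$ from the numerically computed spectrum, and then verify all the constants by interval arithmetic.
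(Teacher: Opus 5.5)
Your outline follows the paper's overall architecture: reduction to Boussinesq, dynamic rescaling, a numerical profile in an explicit basis, singular weights, analytic low-rank corrections, a finite-rank part handled through numerical space-time solutions, and a bootstrap. However, the step you call the main obstacle is set up in a way that would fail.

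\textbf{The singular part of $\nabla\uu$.} You put the ``non-smooth parts of $\nabla\uu$'' into the finite-rank operator $\cK$ and claim the approximation error is small. The singular part of $\nabla\uu=\nabla\nabla^{\perp}(-\Delta)^{-1}\omega$ is an order-zero Calder\'on--Zygmund operator. It is not compact on the $C^{1/2}$-type energy space, so no finite-rank operator approximates it with small error. In the paper the roles are reversed. $\cK$ approximates only the regular, log-Lipschitz pieces: the part $u_{x,R}$ with non-singular kernel and the weight commutator $I_2$ in \eqref{eq:ux_commu1}. The localized singular piece $u_x(\omega\psi)(x,a,b)$ stays in $\cL_0$. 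It is bounded by sharp $C^{1/2}$ constants from the optimal-transport lemma (e.g.\ $C_*\le 2.55$ in \eqref{eq:hol_sharp}). These constants are independent of the weight because $\psi$ is first commuted through the kernel. The term is then dominated by the damping $d_\vp$, which comes from the singular weights near $0$ and from the outgoing property \eqref{eq:outgoing} (Lemma~\ref{lem:damp}) away from $0$. Neither ingredient is in your plan.

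\textbf{The norm.} Your inner-product coercivity $\langle\cL_0W,W\rangle_\vp\le-\lambda\|W\|_\vp^2$ and the ``(or $L^2$)'' option are not viable. By \eqref{eq:introWeightedL2bad}--\eqref{eq:introUxL2bad}, the strong $y$-advection ($a_2\approx 5.5$) produces growth $a_2(1-\beta)/2$ unless $\beta\to 1$. In that limit the $u_x$ bound degenerates like $(1-\beta)^{-1/2}$. This is why the paper works in weighted $L^\infty$ and $C^{1/2}$ along characteristics, using a diagonal-dominance condition like \eqref{eq:diag_domainte} rather than an inner-product estimate.

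\textbf{Cubic vanishing of the perturbation.} You never explain how the perturbation itself lands in the singularly weighted space, and correcting the profile and residual does not do it. Even around an exact profile, an $xy$ mode is regenerated by $c_\omega(\omega)\bar\omega-\uu\cdot\nabla\bar\omega$, by $\overline F_\omega$, and by the nonlinear term $c_\omega(\omega)\partial_{xy}\omega(0)\,xy$. So the equations preserve only the quadratic vanishing \eqref{eq:quadratic-vanishing}, while the weights behave like $|\xx|^{-3}$. The paper obtains cubic vanishing only for a component $W_1$. It puts these Taylor defect modes into the finite-rank part and splits $W=W_1+\wh W_2$ as in \eqref{eq:decomp_m}, with $\wh W_2$ given by Duhamel. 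The numerical space-time solutions $\wh g$ must then be corrected twice: first by $\chi_1$ to restore $O(|\xx|^2)$, then by $a(t)\chi_2$. Here $a$ solves the linear ODE \eqref{eq:ODE2}, chosen so that $\wh g(0)-\chi$ and $(\partial_s-\cL)\wh g$ vanish cubically. Without this, the residual operator $\cR(W_1)$ pushes $W_1$ out of the energy class.

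\textbf{Two smaller points.} Your normalization ``$\partial_x\Theta(0)=0$'' holds automatically by the even symmetry of $\theta$, so it cannot determine a scaling parameter. The paper instead fixes $\omega_x(0)$ and $\theta_{xx}(0)$ via \eqref{eq:normal}, which also gives $a_1=a_3$ and sets the weight threshold. Also, an approximate steady state yields only $|c_\omega-\bar c_\omega|\ll 1$ uniformly, not $c_\omega\to\bar c_\omega$. The uniform bound is all the blowup argument needs.
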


In \cite{ChenHou2023a,ChenHou2023b}, the approximate
self-similar profile is constructed numerically, the linearized operator is
decomposed into a leading operator that enjoys better stability properties analyzed by singularly weighted energy estimates plus a finite-rank operator. 
The finite-rank operator is estimated by constructing approximate space-time solutions with
rigorous error bounds. The overall argument combines singularly weighted
energy estimates, sharp nonlocal functional inequalities, 
quantitative finite-rank perturbation, and numerically constructed approximate solutions.

\subsection{Our computer-assisted proof strategy}\label{sec:strategy}
The 3D axisymmetric Euler equations near the singularity \cite{luo2014potentially} (located on the boundary away from the symmetry axis) can be formally approximated by the 2D Boussinesq 
equations \cite{chen2019finite2,majda2002vorticity,ChenHou2023a}:
\bseq\label{eq:bous}
\begin{align}
 \pa_t \om + \uu \cdot \na \om  &= \th_x ,   \label{eq:bous_a} \\
  \pa_t \th + \uu \cdot \na \th & = 0, 
\quad \uu  =  \na^{\perp}  (-\D)^{-1} \om, 
   \label{eq:bous_b} 
\end{align}
\eseq
in the upper half-space $(x, y) \in \mathbb{R}^2_+$ with  no-flow boundary condition $v(x, 0) = 0$, where $\mathbf{u} = (u, v)$ is the velocity, $\omega = u_y - v_x$ is the vorticity, and $\theta$ is the density or temperature. In this paper, we use boldface variables to denote vectors, e.g., $\xx = (x, y)$ and $\uu = (u, v)$. It is easy to show that
\[
\bal
  \td{\om}( \tau, \xx) &= C_{\om}(\tau) \om(t(\tau) ,  C_l(\tau) \xx ), \\
     \td{\th}(\tau,\xx) & = C^2_{\om}(\tau) C_l(\tau)^{-1}  \th( t(\tau), C_l(\tau) \xx ),  \\
\eal
\]
solve the dynamic rescaling equations 
 \beq\label{eq:bousdy0}
\bal
\td{\om}_{\tau} + ( c_l(\tau) \xx + \td{\uu} ) \cdot \na \td{\om}  &=   c_{\om}(\tau) \td{\om} + \td{\th}_x ,  \\
 \td{\th}_{\tau} + ( c_l(\tau) \xx + \td{\uu} ) \cdot \na \td{\th} &  = (c_l(\tau) + 2 c_{\om}(\tau) )  \td \th,
\eal
\eeq
where $\td \uu = (\td u, \td v)^T = \na^{\perp} (-\D)^{-1} \td{\om}$, $\xx = (x, y)^T$, $\tau$ is the rescaled time, $c_l(\tau), c_{\om}(\tau)$ are time-dependent scaling parameters, and 
\beq\label{eq:Cl_Cw}
\bal
  C_{\om}(\tau) = e^{ \int_0^{\tau} c_{\om} (s)  d s }, \quad
  C_l(\tau) = e^{\int_0^{\tau} -c_l(s) ds } ,
  \quad  t(\tau) = \int_0^{\tau} C_{\om}(s) d s .
\eal
\eeq

Since we mainly work on the dynamic rescaling equations \eqref{eq:bousdy0}, to simplify our presentation, we still use $t$ to denote the rescaled time $\tau$  and simplify $\td \om, \td \th$ as $\om, \th$. Then \eqref{eq:bousdy0} becomes: 
\beq\label{eq:bousdy1}
\bal
\om_t + (c_l \xx + \uu) \cdot \na \om & = \th_x + c_{\om} \om , \\
  \th_t + (c_l \xx + \uu)\cdot \na \th & =  
(c_l + 2 c_{\om}) \th .
\eal
\eeq

We impose normalization conditions on $c_{\om}, c_l$:
\beq\label{eq:normal}
c_l(t) = 2 \f{\th_{xx}(t,{\bf 0}) }{\om_x(t,{\bf 0})}, \quad c_{\om}(t) = \f{1}{2} c_l(t) + u_x(t,{\bf 0}) .
\eeq
For smooth data, these two conditions enforce
\beq\label{eq:normal1}
\theta_{xx}(t,{\bf 0})=\theta_{0,xx}({\bf 0}), \quad \omega_x(t,{\bf 0})=\omega_{0,x}({\bf 0}),
\eeq
for all time. This closes the system \eqref{eq:bousdy1}, \eqref{eq:normal}.

Our strategy is to first construct the approximate blowup profile $(\bar \omega, \bar \theta, \bar c_{\omega}, \bar c_l)$ for \eqref{eq:bousdy1} and prove that the perturbation
$\tilde{G} = (\omega(t) - \bar \omega,, \theta(t) - \bar \theta, c_{\omega}(t) - \bar c_{\omega}, c_l(t) - \bar c_l)$ remains small in some energy functional $E(\cdot)$: $ E ( \td G(t)) \ll 1$
for any $ t \geq 0 $. With this stability estimate and the values of the approximate scaling parameters $\bar c_l, \bar c_{\omega}$, the blowup asymptotics of the vorticity follows the form 
 \begin{eqnarray}
\omega(t,x) \approx \frac{1}{ T- t } \bar{\omega}(t, \frac{ x}{ (T-t)^{ \mu(t)}} ),  \qquad
 \| \tilde{\omega}(t, \cdot) -  \bar{\omega} \|_X \ll 1 \;,
 \end{eqnarray}
with some $\mu(t) > 0$ close to $- \f{\bar c_l}{\bar c_{\om}}\approx 2.92 $, where the dynamically rescaled vorticity profile $\tilde{\omega}(t, \cdot)$ from \eqref{eq:bousdy0} remains close to an (approximate) blowup profile $\bar \omega \neq 0$ in some norm $|| \cdot ||_X$ uniformly up to the blowup time $T$.
For the Hou-Luo blowup scenario \cite{luo2014potentially},  we prove $\mu(t) \approx 2.92$ in \cite{ChenHou2023a,ChenHou2023b}.

Our framework consists of three steps:

1. First construct the approximate blowup profile $(\bar \omega, \bar \theta, \bar c_{\omega}, \bar c_l)$ for \eqref{eq:bousdy1}. 

2. Linearize the equations around $(\bar \omega, \bar \theta, \bar c_{\omega}, \bar c_l)$ to obtain the perturbation equation for $\widetilde G = (\omega(t) - \bar \omega, \theta (t) - \bar \theta,  c_{\omega}(t) - \bar c_{\omega}, c_l(t) - \bar c_l)$: 
\beq
\label{eq:lin_nonlin} \partial_{t} \widetilde G = \mathcal{L} \widetilde G + \mathcal{N}(\widetilde G) + \mathcal{R}(\bar G), 
\eeq 
where $\mathcal{L}, \mathcal{N}, \mathcal{R}$ are the linear terms, nonlinear terms, and residual error, respectively.

3. 
Establish nonlinear stability estimates for a certain energy $E(t)$ of the perturbation $\widetilde G$, such as 
\bseq\label{eq:intro_EE}
\beq\label{eq:intro_EE:a}
 \frac{d}{d t} E(t) \leq -\lambda E(t) + C E(t)^2 + \varepsilon, \quad \lambda > 0, 
 \eeq
 where $-\lambda E$, $C E^2$, and $\varepsilon$ estimate the contributions from the linear term, nonlinear term, and the residual error of the approximate steady state, respectively.
Using a bootstrap argument, we obtain nonlinear stability $E(t) < E^*$ for some threshold $E^* \ll 1$ if $\varepsilon$ satisfies the \emph{quantitative bound}: 
\beq\label{eq:intro_EE_ep}
4 C \e  < \lam^2.
\eeq
\eseq

In \cite{ChenHou2023a}, the stability analysis is performed in weighted $L^{\infty}$-type spaces. 
We establish energy estimates similar to \eqref{eq:intro_EE:a}, and impose a smallness condition on the residual error similar to \eqref{eq:intro_EE_ep}. 
Compared to \eqref{eq:intro_EE},  
 the $L^{\infty}$-type stability estimates rely on a local condition on the residual error for each $\xx$, which is similar to but weaker than \eqref{eq:intro_EE_ep} and allows the residual error to be much larger away from the origin.  We refer to \cite[Lemmas A.1, A.2]{ChenHou2023a} for the localized 
 inequalities for stability and 
 discussions in \cite[Section 5.9]{ChenHou2023a} for the comparison between 
 the local and global stability conditions.

\subsection{Singularly weighted $L^{\infty}$-type estimates}\label{sec:intro_linf}

We derive the equations for the perturbation $(\om - \bar \om, \th - \bar \th)$ in Section \ref{sec:lin}. To obtain the crucial linear stability (leads to $\lam>0$ in \eqref{eq:intro_EE})  around an approximate self-similar profile, we must choose an appropriate functional space to control the growth of the perturbation $(\om - \bar \om, \th - \bar \th)$ for all time.  Below, we introduce the variables $\eta$ and $\xi$ to denote the perturbations of $\theta_x$ and $\theta_y$, respectively, and we abuse notation by letting $\om$ denote the perturbation in \eqref{eq:bousdy1}, rather than the unperturbed variable. In Section \ref{sec:lin}, we show that $\xi$ enjoys stronger damping than $\eta$ near the origin, 
and $\xi$ is weakly coupled with the $(\omega,\eta)$ subsystem near the boundary $y=0$. Thus,
we first focus on the coupling between $\omega$ and $\eta$ 
by setting $\xi = 0$ near the origin.  A first indication of the stability mechanism already appears if one keeps only the dominant local terms 
in \eqref{eq:lin} and freezes the coefficients near the origin. This leads to the following model system
\begin{equation}\label{eq:modelToy}
\omega_t+(a_1x\partial_x+a_2y\partial_y)\omega=-\omega+\eta,
\qquad
\eta_t+(a_1x\partial_x+a_2y\partial_y)\eta=a_3\eta,
\end{equation}
with parameters 
\beq\label{eq:model_a}
a_1\approx 0.5,
\qquad a_2\approx 5.5,
\qquad a_3\approx 0.5 ,
\eeq
which are the leading order coefficients of the local terms in the linearized equations \eqref{eq:lin}
\beq\label{eq:model_expand0}
\bar c_l x + \bar u = a_1 x + O(|\xx|^2),
\quad   \bar c_l y + \bar v = a_2 y + O(|\xx|^2) ,
\quad  2\bar c_\omega-\bar u_x = a_3 +  O(|\xx|) .
\eeq

We remark that, since $\na \bar\th$, $\bar\om$, $\bar u_y$, and $\bar v_x$ are $O(|\xx|)$ near $\xx=0$, the model \eqref{eq:modelToy} agrees with the linear system \eqref{eq:lin} at leading order near $\xx=0$, up to an $O(|\xx|)$ error, after dropping the nonlinear terms $N_i$, $N(\om)$, $N(\th)$ and the error terms $F_i$, $\overline F_{\om}$, $\overline F_{\th}$.
Thus, to analyze the linear stability of the full system \eqref{eq:lin}, one needs to obtain stability for the above system. 

Note that the coefficient $a_3>0$ means that the $\eta$ equation contains a growing local term. 

To extract the crucial damping terms, we use weighted $L^{\infty}$ estimates. By taking
\begin{equation}
\phi(x)=r^{-\gamma},
\qquad r=(x^2+y^2)^{1/2},
\label{eq:introWeightRadial}
\end{equation}
and computing the evolution of $\eta\phi$, one obtains
\begin{equation}
\partial_t(\eta\phi)+(a_1x\partial_x+a_2y\partial_y) (\eta\phi)=a(\gamma,x,y)\,\eta\phi,
\label{eq:introWeightedEta}
\end{equation}
where
\[
a(\gamma,x,y)=a_3+\frac{a_1x\partial_x\phi+a_2y\partial_y\phi}{\phi}.
\]
Since $a_2\ge a_1$, one has the upper bound
\begin{equation}
a(\gamma,x,y)\le a_3-a_1\gamma.
\label{eq:introDampingCoeff}
\end{equation}
Thus a singular weight creates damping: if $\gamma > \f{a_3}{a_1}$. 
Note that for the Boussinesq system, the ratio $\f{a_3}{a_1}$ is \emph{exactly} $1$.
\footnote{
From \eqref{eq:model_expand0} and Taylor expansion, we have $a_1 = \bar c_l 
+ \pa_x \bar u(0)$ and $a_3 = 2 \bar c_{\om} - \bar u_x(0)$. 
The normalization conditions \eqref{eq:normal} 
imply $ a_1 = a_3 $.}
The underlying stability mechanism is that, although there is a growth term $a_3 \eta$, the transport is strong enough to push the perturbation to spatial infinity, leading to linear stability 
in weighted $L^{\infty}$ norm. In the corresponding weighted $L^\infty$ estimate, one obtains inequalities of the form 
\footnote{
The integral version of estimate \eqref{eq:intro_wg} can be derived rigorously using the flow map associated with 
the flow  $(a_1 x, a_2 y)$ in \eqref{eq:modelToy}.
}
\begin{equation}
\frac{d}{dt}\|\omega\phi\|_{L^\infty}
\le (-1-a_1\gamma)\|\omega\phi\|_{L^\infty}+\|\eta\phi\|_{L^\infty},
\qquad
\frac{d}{dt} \|\eta\phi\|_{L^\infty}
\le (a_3-a_1\gamma)\|\eta\phi\|_{L^\infty}.
\label{eq:intro_wg}
\end{equation}
By taking $\g > \f{a_3}{a_1}$, we obtain exponential decay estimates 
for $\|\omega\phi\|_{L^\infty}, \|\eta\phi\|_{L^\infty}$. 
By taking a larger $\g$, we obtain stronger decay estimates.

Away from the origin, we use a crucial outgoing property of the flow \eqref{eq:outgoing} to propagate the stability of the local terms from the origin to the \emph{entire domain}; see Section \ref{sec:outgo}.

Since the Riesz transform $\pa_{ij}(-\D)^{-1}$ associated with the nonlocal velocity $\na \uu(\om)
= \na \na^{\perp} (-\D)^{-1}$ is not bounded in $L^{\infty}$, we perform the weighted $C^{1/2}$ estimates in \cite{ChenHou2023a} to close the estimates. We use ideas and methods similar to the above 
with singular weights to extract the damping terms in the weighted $C^{1/2}$ estimates.

By contrast, a weighted $L^2$ energy estimate suffers from the $y$-advection. 
If one takes a weight
$\phi=x^{-\alpha}y^{-\beta}, \ 0<\beta<1$, 
and performs weighted $L^2(\phi)$ estimate on the $\eta$ equation \eqref{eq:modelToy}, 
then integration by parts yields 
\begin{equation}
\f{1}{2} \f{d}{d t} \int \eta^2 \phi 
= \big( \frac{a_2(1-\beta)}{2} + \f{a_1(1-\al) }{2} + a_3  \big) \int \eta^2\phi,
\label{eq:introWeightedL2bad}
\end{equation}
where the term 
$\f{a_1(1-\al) }{2}, \f{ a_2(1-\beta)}{2}$ comes from the advection $a_1 x \pa_x, a_2 y \pa_y$ in \eqref{eq:model_a}, respectively. Since the energy must be finite and $\eta(x, 0) \neq 0$,
\footnote{
The Boussinesq equations \eqref{eq:lin} do not preserve the vanishing condition $\eta(x, 0)=0$ 
on the boundary $y=0$ 
due to the nonlocal terms.
}
one cannot choose $\beta\ge 1$.  
If $(1-\beta)$ is not small, since $a_2$ is much larger than $a_1, a_3$ (see \eqref{eq:model_a}), 
the term $ \f{ a_2(1-\beta)}{2}$ arising from the $y$-advection  in \eqref{eq:introWeightedL2bad} contributes a large positive growth term. On the other hand, if $(1-\beta)$ is small, 
the estimate of the nonlocal term $u_x$ in \eqref{eq:lin} contributes a large constant 
$(1-\beta)^{-1/2}$
\begin{equation}
\|u_x\phi^{1/2}\|_{L^2}
\le C (1-\beta)^{-1/2} \|\omega\phi^{1/2}\|_{L^2} .
\label{eq:introUxL2bad}
\end{equation}
As a result, one needs to take a very singular weight $x^{-\al} y^{-\b}$ 
or $  |(x,y)|^{-\al} y^{-\b}$ with large $\al$ to 
extract the desired damping effect in the singularly weighted $L^2$ estimates.

A potential approach is to perform $H^k$ estimate \cite{merle2022implosion2,buckmaster2022smooth,chen2024Euler} with large $k$, but the coercivity of the linearized operator in $H^k$ estimate is unclear, even for top-order terms.\footnote{
 Taking a partial derivative $\pa_x^i \pa_y^j$ plays a role similar to that of a singular weight $ x^{-i}y^{-j}$. However, to derive a damping term for $ \pa_x^k \eta$ in the model \eqref{eq:modelToy}, one needs $k \geq 14$, and the resulting equations contain many mixed derivatives, such as 
 $\pa_x^k \pa_y^l \eta$, and difficult commutator terms.
Due to the boundary, $\pa_y$ does not commute with the nonlocal operator $ \na^{\perp}(-\D)^{-1}$ for the velocity. Thus, in the $H^k$ estimates, it is not clear if one can obtain coercivity estimates for the leading order operator. 
}
Moreover, constructing a profile with a small residual in $H^k$ norm for large $k$, e.g. $k \geq 14$, would be extremely difficult. These considerations lead us 
to use singularly weighted $L^\infty$ and weighted $C^{1/2}$ estimates for stability estimates.

We remark that singularly weighted energy estimates provide a powerful approach to stability analysis for (nearly) self-similar blowup. The exponent $\b$ in the singular weight $|\xx|^{-\b}$ precisely captures the stability properties of the linearized operator near the origin. 
This method has proved robust in a range of settings: for numerically constructed blowup profiles in 
1D fluid models by Chen--Hou--Huang \cite{chen2019finite,chen2021HL};
for analytic blowup profiles in 1D fluid models by Chen \cite{chen2020singularity,chen2020slightly};
for Type-I self-similar blowup in the semilinear heat equation and the complex Ginzburg--Landau equation with logarithmic correction \cite{hou20242,chen2025stability}, using purely energy estimates without delicate spectral analysis; for establishing \emph{full} stability of the angular velocity, a key ingredient in proving vorticity blowup in compressible Euler equations \cite{chen2024Euler,chen2024vorticity}; and for the \emph{exact} characterization of unstable modes in smooth implosion for \emph{non-isentropic} compressible Euler equations \cite{chen2026implosion}.

\subsection{Quantitative Finite-rank perturbation} 
\label{sec:finite_rank}

The most challenging part in the linear stability analysis is to control the nonlocal velocity  $\na \uu(\om) = \na \na^{\perp}(-\D)^{-1} \om$; see \eqref{eq:lin}.
Standard Calder\'on--Zygmund theory does not provide constants that are sufficiently explicit or sharp 
enough so that we can treat $\uu$ perturbatively. To estimate the nonlocal terms, we derive sharp $C^{1/2}$ estimates for $\na \uu$ using the symmetry properties of the kernels and some techniques from optimal transport \cite{villani2021topics}. We decompose the Biot-Savart law
into two parts. The main part captures the most singular part of the Biot-Savart law, and we apply the sharp functional inequalities for its $C^{1/2}$ estimate. 
The other part is more regular, and we approximate it by a finite rank operator and obtain sharp estimates by constructing approximate space-time solutions with rigorous error control.
For more discussion,
we refer to Section  \ref{sec:nonlocal} 
 and focus on the finite-rank perturbation below.

We decompose the solution $W = (\omega, \theta_x, \theta_y)$ 
into the approximate profile  $\overline{W} = (\bar{\omega},\bar{\theta}_x, \bar{\theta}_y)$
and the perturbation $\td {W}$, and denote by $\cL$ the linearized operator around $\overline{W}$ that governs the perturbation $\td {W}$ in the dynamic rescaling formulation (see Section \ref{sec:lin}):
\beq\label{eq:intro_lin}
\partial_t \td W= \mathcal{L} \td W .
\eeq
We decompose the perturbation $ \td W=\td W_1+ \td W_2$ and the operator $ \cL = \cL_0 + \cK$ 
for some finite-rank operator $\cK$ to be chosen,  and decompose the above equation as 
\begin{equation}\label{eq:intro_split}
\partial_t \td W_1={\mathcal L}_0 \td  W_1,\qquad
\partial_t \td W_2={\mathcal L} \td W_2+ \cK \td W_1.
\end{equation}
We design the finite rank operator $\cK$ to approximate more regular nonlocal terms 
in the full operator $\cL$, so that after subtracting $\cK$ from $\cL$, the nonlocal parts in the 
resulting operator $\mathcal L_0$ can be treated perturbatively to the local damping terms (derived similarly to that in Section \ref{sec:intro_linf}).   
We obtain linear stability estimates for $W_1$ using singularly weighted energy estimates.

The effect of  $\cK$  is estimated through approximate space-time solutions.
We explain this idea by assuming that $\cK$ is a rank-one operator 
\[
\cK(W)=a(x)P(W), \quad 
P(W)=\int \bar p(x)W(x)\,dx ,
\]
for some known coefficient $a(x)$ and a bounded scalar functional $P(W)$.  In this case, $W_2$ in \eqref{eq:intro_split} is  driven by the rank-one forcing and can be  represented through the Duhamel
formula
\begin{equation}
W_2(t)=\int_0^t P(W_1(s))\,e^{{\mathcal L}(t-s)}a(x)\,ds.
\label{eq:introDuhamel-1}
\end{equation}

Since $\cL$ is complicated, 
we cannot estimate $e^{\cL(t-s)} a(x)$ directly. 
Instead, we construct a numerical 
approximation $\wh g(t, \xx)$ for $e^{\cL (t- s)} a(x)$ by solving the linear equation 
$  \pa_t g = \cL g $ from $g(0, \xx) = a$ numerically. To track the error in solving $\wh g$ rigorously, we introduce the residual operator $\cR(W_1)$, 
\beq\label{eq:cR_intro}
\cR(\td  W_1) \teq P( \td W_1(t))  \cdot ( \wh g(0, \xx) - a(\xx) ) + \int_0^t  P( \td W_1(s)) \cdot  (\pa_t -\cL) \wh g( t- s)  ds  .
\eeq

With the approximate solution $\wh g$, we modify the formula for $W_2$ as 
\beq\label{eq:W2_form_hat_intro}
 \wh W_2 = \int_0^t   P(W_1(s)) \cdot \wh g(t- s, \xx)  d s.
\eeq

We also modify the decomposition \eqref{eq:intro_split} as follows
\bseq\label{eq:decomp_m}
\begin{align}
\pa_t \td  W_1  &=  \cL_0 \td  W_1 - \cR(   \td W_1, t) , \label{eq:decomp_ma} \\
\pa_t \wh W_2 &= \cL \wh W_2 + \cK  \td W_1 +  \cR(  \td W_1 , t) , \label{eq:decomp_mb}
\end{align}
\eseq
so that $\td W_1 + \wh W_2$ solves \eqref{eq:intro_lin}, where $\td W_1|_{t=0} = \td W_0, \; \wh W_2|_{t=0}=0$. Note that $\wh W_2$ solves \eqref{eq:decomp_mb} \textit{exactly} since the residual operator captures \textit{all} the errors. 
If the error $( \pa_s - \cL) \wh g(s, \xx)$ and $\wh g(0, \xx) - a(\xx) $ are small, 
we can treat the residual operator $\cR$ as a small perturbation
to the stable part $\cL_0$
in some suitable functional space using a bootstrap argument:  $|| \cR( W_1, t) ||_X \leq \hat \e \sup_{s\leq t} || W_1(s)||_X, \  \hat \e \ll 1$. 

We remark that we \emph{only} perform the singularly weighted estimate on $\td W_1$ 
and $\wh W_2$ is estimated via the formula \eqref{eq:W2_form_hat_intro}.
By proving that $\cL_0$ is stable in some functional spaces $X$ and 
verifying the temporal decay of the numerical function $\wh g$, we obtain the stability of the full perturbation $W = \td W_1 + \wh W_2$. We generalize the  above method to finite-rank operator $\cK$ using linearity, 
and develop a \emph{quantitative finite-rank perturbation} method.

We note that decomposing the linearized operator into a coercive operator 
plus a compact or finite-rank operator $\cK$ 
and the splitting \eqref{eq:intro_split} are powerful analytical approaches. The above approach obtains \emph{quantitative} estimates of the 
compact perturbation with \emph{computable constants}, and is well suited for computer-assisted proofs
which require \emph{computable} constants. 
The method is very different from compact perturbation via semigroup methods, see e.g. \cite{merle2022implosion2,buckmaster2022smooth,chen2024vorticity,chen2024Euler,jia2015incompressible}, which generally yield estimates with \emph{non-computable} constants. This 
\emph{quantitative compact perturbation} method has been used in an essential way in the recent work of Hou-Wang-Yang to establish a computer-assisted proof of nonuniqueness of Leray--Hopf solutions to the unforced 3D Navier--Stokes equations \cite{HouWangYang2025}. 
The splitting \eqref{eq:intro_split} and the decomposition of the perturbation $\td W$ into a \emph{fully} stable component $\td W_1$ and a more regular component $\td W_2$ estimated via the Duhamel formula 
\eqref{eq:intro_split}, \eqref{eq:introDuhamel-1} have been extended in subsequent works by Chen and collaborators to singularity formation in compressible Euler equations \cite{chen2024vorticity,chen2024Euler} and to the Landau equation in kinetic theory \cite{bedrossian2026finite}.

\subsection{The need for analytic low rank corrections}\label{sec:analytic_low_rank}

In the Boussinesq/Euler stability analysis, the singular weights are effectively of order $|\xx|^{-3}$ near the origin. 
To employ the singularly weighted energy estimates, the perturbation must vanish cubically near the origin 
\begin{equation}
f(\xx)=O(|\xx|^3) ,
\qquad \text{near } \xx=0 ,
\label{eq:introCubicNeed}
\end{equation}
so that the weighted $L^{\infty}$ energy norm is bounded.  However, the natural odd/even symmetry class of the perturbation 
and the equations preserves only quadratic vanishing order (see discussion in Section \ref{sec:lin}). 
Thus, one has to perform low-rank correction to the perturbation. 

Using the finite-rank perturbation method in Section \ref{sec:finite_rank} and 
choosing low-rank correction modes to $\cL$ near the origin based on Taylor expansion
\footnote{
This finite collection of modes is part of the finite-rank operator $\cK$.
}, 
the resulting operator $\cL_0 W_1 = (\cL - \cK) W_1$ preserves cubic vanishing order. This implies that $\td W_1$ in \eqref{eq:intro_split} satisfies $\td W_1(t,\xx) = O(|\xx|^3)$ near $\xx=0$, and thus belongs to the singularly weighted space.

However, for the modified system \eqref{eq:decomp_m}, this correction is more delicate since the approximate profile, the approximate space-time solutions $\wh g$, and the numerical stream function are all constructed numerically. If the coefficients in \eqref{eq:decomp_m} come from numerical computations, 
due to numerical error, how can one rigorously justify an \emph{exact} cubic vanishing condition after correction?  

We develop the analytic low-rank correction method in \cite{ChenHou2023a,ChenHou2023b} precisely 
to address this difficulty and construct a \emph{corrected} numerical solution with the desired vanishing order near the origin. The key point is that the numerical step \emph{only} determines coefficients in explicit basis representations. Once these coefficients are fixed, they determine functions defined on the entire domain, rather than merely numerical
values on grid points. The subsequent low-rank corrections are performed \emph{analytically} on the resulting globally defined functions, using their Taylor expansions near the origin.  Hence the improved vanishing order is an \emph{exact} analytic property of the corrected function.

In the stability analysis, we apply analytic low-rank  corrections in two parallel forms. The first arises in the construction of the approximate space-time solution, e.g. $\wh g$ in \eqref{eq:cR_intro},
\eqref{eq:W2_form_hat_intro}, appearing in the Duhamel representation. 
To ensure that  \eqref{eq:decomp_ma} preserves the cubic vanishing order, we require that 
the errors $ \wh g(0, \xx) - a(\xx), (\pa_s -\cL) \wh g(s)$ in \eqref{eq:cR_intro} vanish $O(|\xx|^3)$ near $\xx=0$. Due to numerical error, the raw numerical construction does not satisfy these vanishing conditions. We therefore perform two analytic low-rank corrections near the origin to obtain the desired vanishing order. 
As a result, the residual operator $\cR$ and the modified equation \eqref{eq:decomp_ma} preserve the cubic vanishing order $W_1(t, \xx) = O(|\xx|^3)$, and $W_1$ belongs to the singularly weighted space.

The second arises in the numerical construction of the stream function. The 
residual error in solving the Poisson equation for the 
uncorrected  stream function fails to satisfy the exact vanishing condition required by the singularly weighted norm (see discussion in Step 2 in Section \ref{sec:numer_vel}). One therefore performs an analogous finite-rank correction to the stream function, designing the correction so that the induced velocity can be derived explicitly and analytically. As a result, the corrected residual and the induced velocity error belong to the admissible singularly weighted space.

The purpose of this paper is to isolate this analytic low-rank correction method and present it in a self-contained way. We do not attempt to reproduce the entire stability proof nor review the main steps of the proof in 
\cite{ChenHou2023a,ChenHou2023b}. 
We emphasize that this correction method is not merely an auxiliary construction, but 
an essential part of the linear stability argument. Without it, the 
numerical solution or its induced error fail to satisfy the required vanishing conditions and  become unbounded in the singularly weighted norm. The analytic low-rank correction method, together with
the finite-rank perturbation method, 
upgrades the vanishing order of the perturbation from $\td W = O(|\xx|^2)$ to exact cubic vanishing order for $\td W_1$,    while preserving the explicit basis representation (such as those in $\wh g, \wh W_2$ in \eqref{eq:W2_form_hat_intro}) needed for rigorous verification. 
This is precisely why the method deserves to be presented separately and in some detail.

\subsection{A very brief literature review}

We do not attempt to survey the extensive literature on Euler singularities and related model problems, and instead mention only the works most directly relevant to our discussion.

Elgindi and collaborators proved the remarkable result that the  axisymmetric
Euler equations in $\R^3$ without swirl can develop a finite-time singularity for 
$C^{1,\alpha}$ initial velocity with sufficiently small $\alpha$
\cite{elgindi2019finite,elgindi2019stability}.  Subsequently, Chen-Hou \cite{chen2019finite2} established finite-time singularity in the 2D Boussinesq
and 3D Euler equations with $C^{1,\al}$ velocity and boundary. 
In these works, the parameter $\alpha$ plays an essential role as a small parameter.

After our works \cite{ChenHou2023a,ChenHou2023b} on singularity formation in 
2D Boussinesq and 3D Euler equations with smooth data and boundary, there have been several new developments.
Cordoba, Martinez-Zoroa, and Zheng \cite{cordoba2023finite} developed a new framework based on layer constructions and established blowup for 3D axisymmetric Euler equations with $C^{1,\alpha} \cap C^{\infty}(\R^3 \backslash \{0 \})$ velocity and small $\al$. In \cite{chen2024remarks}, Chen refined the construction of initial perturbation in \cite{elgindi2019finite}, \cite{chen2019finite2} to obtain self-similar blowup solution with $C^{\infty}$ regularity except the blowup point. 
Elgindi-Pasqualotto  \cite{elgindi2023instability} constructed finite time blowup of 
 Boussinesq equations in $\R^2$ and 3D axisymmetric Euler equations away from the 
axis with $C^{1,\al}$ velocity and small $\al$.
Cordoba and Martinez-Zoroa \cite{cordoba2023blow}  established blowup for the forced 3D Euler equations with smooth velocity and a bounded, solution-dependent force $f \in C^{1,1/2-}$.

Very recently, Shkoller \cite{shkollar2026} developed a new Lagrangian framework to prove finite-time blowup for the 3D axisymmetric Euler equations without swirl from finite-energy $C^{1,\alpha}$
 initial velocity for every $\alpha \in (0,1/3)$. Independently, Chen  \cite{chen2026eulerI,chen2026eulerII} 
constructed exact $C^{1,\alpha}$
 self-similar profiles and proved asymptotically self-similar blowup for 3D axisymmetric Euler equations without swirl from compactly supported $C^{\alpha}$ initial vorticity and finite-energy $C^{1,\al}$ initial velocity for every $\alpha \in (0,1/3)$, by lifting singularities of a 1D model to the 3D Euler equations. Also independently, Shao, Wei, P. Zhang, and Z. Zhang \cite{shao2026self} developed 
a Schauder fixed-point framework to construct exact self-similar blowup profiles 
for 3D axisymmetric Euler equations without swirl with $C^{1,\alpha}$ velocity for every $\alpha \in (0,1/3)$. These three works use distinct methods and establish blowup for the full range
$\alpha\in(0,\tfrac13)$. We remark that the 3D axisymmetric Euler equations 
without swirl admits global solutions from 
finite-energy $C^{1,\alpha}$ initial velocity for all $\alpha\geq\tfrac13$, including the endpoint  $\alpha=\tfrac13$ \cite{shao2026global}.

\vspace{0.1in}
\paragraph{\bf Organization}

The paper is organized as follows. In Section \ref{sec:review}, we briefly review
the stability analysis in \cite{ChenHou2023a,ChenHou2023b}, including the singularly weighted estimates and nonlocal estimates. 
Starting from Section \ref{sec:principle}, we focus on the analytic low-rank correction method. We explain the principle of 
numerical construction versus analytic correction, discuss the correction of the approximate space-time solution and its induced residual error in a model problem, and  describe similar correction for the numerical stream-function solver.  Our goal is to keep the overall structure of the full stability analysis visible while providing a self-contained explanation of the analytic low-rank corrections that  rigorously reconcile the singularly weighted framework with numerical constructions.

\section{Brief review of the stability analysis in \cite{ChenHou2023a}}
\label{sec:review}

In this section, we briefly review the stability analysis in \cite{ChenHou2023a} before turning to the specific low-rank correction method.

\subsection{The 2D Boussinesq/Euler stability setting}
\label{sec:lin}

Recall from Section \ref{introduction} that the dynamically rescaled 2D Boussinesq system is given by:
\begin{equation}
\pa_t \omega+(c_lx+u)\cdot\nabla\omega=\theta_x+c_\omega\omega,
\qquad
\pa_t \theta+(c_lx+u)\cdot\nabla\theta=c_\theta\theta,
\label{eq:rescaledBq}
\end{equation}
where $c_\theta = c_l + 2 c_{\om}$, the velocity $\uu$ is recovered from $\omega$ using the Biot--Savart law. 
The rescaling parameters $c_l, c_{\om}$ are determined via the normalization conditions \eqref{eq:normal}.
We first construct the approximate profile  with sufficiently small residual error  by solving \eqref{eq:rescaledBq} numerically for a long enough time.
A stable nearly self-similar singularity is then reduced to the nonlinear stability of an approximate steady state  $(\bar\omega,\bar\theta,\bar \uu,\bar c_l,\bar c_\omega)$ of the  system \eqref{eq:rescaledBq}.

We linearize \eqref{eq:rescaledBq}  around the approximate steady state and denote the perturbation as $\tilde{\omega} = \omega - \bar{\omega}$, $\tilde{\uu} = \uu - \bar {\uu}$, and $\tilde{\theta} = \theta - \bar{\theta}$. To simplify the notations, we still use $(\omega, \uu, \theta)$ to denote the perturbation $(\tilde \omega, \tilde u, \tilde \theta)$. We obtain the following linearized equations for the perturbation $(\omega,\theta)$:
\bseq\label{eq:lin}
\begin{align}
\pa_t  \omega & = -(\bar c_lx+\bar \uu)\cdot\nabla\omega+\theta_x+\bar c_\omega\omega- \uu\cdot\nabla\bar\omega+c_\omega\bar\omega+\overline F_\omega+N(\omega), \label{eq:linomega}  \\
\pa_t \theta & = -(\bar c_lx+\bar \uu)\cdot\nabla\theta+\bar c_\theta\theta+c_\theta\bar\theta- \uu\cdot\nabla\bar\theta+\overline F_\theta+N(\theta), \label{eq:lintheta}
\end{align}
where $\uu= \na^{\perp}(-\D)^{-1} \om$, $\overline F_\omega,\overline F_\theta$ are residual errors, and $N(\omega),N(\theta)$ denote nonlinear terms. \footnote{
Since the analysis of the nonlinear terms and error terms are not the main focus of this paper,
we do not present their formulas and refer them to \cite[Section 2.5]{ChenHou2023a}. 
}

Taking derivatives on \eqref{eq:lintheta} and denoting 
$\eta=\theta_x, \xi=\theta_y$, we obtain 
\begin{align}
\partial_t\eta & = -(\bar c_l \xx+\bar \uu)\cdot\nabla\eta + (2\bar c_\omega-\bar u_x)\eta - \bar v_x\xi - u_x\cdot\nabla\bar\theta - \uu \cdot\nabla\bar\theta_x + 2c_\omega\bar\theta_x + N_2 + F_2,
\label{eq:etafull}\\
\partial_t\xi & = -(\bar c_l \xx+\bar \uu)\cdot\nabla\xi + (2\bar c_\omega+\bar u_x)\xi - \bar u_y\eta - u_y\cdot\nabla\bar\theta - \uu\cdot\nabla\bar\theta_y + 2c_\omega\bar\theta_y + N_3 + F_3.
\label{eq:xifull}
\end{align}
\eseq
Because of the normalization conditions $\pa_{xx} \th(t, {\bf 0} ) = 0, \pa_x \om(t, {\bf 0} ) = 0$ (derived from \eqref{eq:normal1} for the full solution), the odd symmetry of $\om(x, y)$ in $x$,
and the even symmetry of $\th(x, y)$ in $x$, the perturbation satisfies the local vanishing conditions
\begin{equation}
\omega=O(|x|^2),
\qquad
\theta_x=O(|x|^2),
\qquad
\theta_y=O(|x|^2)
\qquad\text{near }x=0.
\label{eq:quadratic-vanishing}
\end{equation}
Thus the natural symmetry class yields only quadratic vanishing order.

In the $\xi$-equation \eqref{eq:xifull}, coefficient $2\bar c_\omega+\bar u_x$ is strongly negative near the origin, whereas in the $\eta$-equation \eqref{eq:etafull}, coefficient $2\bar c_\omega-\bar u_x$ is slightly positive. In fact, we may explicitly derive 
$2\bar c_\omega+\bar u_x \approx -5.5, \ 2 \bar c_{\om} - \bar u_x \approx 0.5$,
near $\xx=0$. Thus $\xi$ enjoys stronger damping than $\eta$. Moreover, since $\pa_x \bar v(x,0) =0$, 
$\xi$ is weakly coupled with $(\om, \eta)$ subsystem \eqref{eq:lin} near the boundary.

\subsection{Steps of blowup analysis}

\begin{figure}
  \centering
\begin{forest}
 for tree={l sep=3pt,
    s sep = 1pt,
    grow=east,
    edge={thick},
    parent anchor=east,
    child anchor=west,
    }
      [ 
      Nonlinear \\ stability
        ,  text width= 1.8cm 
              [
              Approximate
                profile  , text width=3.5cm                                                                          
                    [Estimate residual error , 
                    text width=4cm
                    ]
                    [Estimate profile 
                     , text width=4cm]    
                    [Construct profile, text width=4cm ]  
              ]
              [Nonlinear  estimates,  text width=3.5cm 
                  [Estimate  nonlinear  terms 
                    , text width=5cm
                   ]
                  [Estimate  finite   rank 
                    part $\wh W_2$ 
                    , text width=5cm      
                       [Estimate residual  
                       error for $\wh W_2$             
                        , text width=5.5cm]                            
                        [Construct finite  rank  part $\wh W_2$, 
                        text width= 5.5cm] 
                  ]
              ]     
              [Linear 
              stability,   text width=3.5cm    
                  [Estimate   nonlocal terms , text width=5cm
                  ]
                  [Singularly weighted energy  estimates,  text width=5cm]
                  [Finite-rank  perturbation,  text width=5cm]
                  [Sharp H\"older estimates , text width= 5cm
                      [Compute sharp   constants 
                      , text width= 4.5cm]
                  ]
              ]                                 
            ]
      ]
\end{forest}

\caption{Level 2: main steps. Level 3: main estimates and methods. Level 4: compute and estimate explicit functions or constants.
}
\label{fig:tree}

\end{figure}

The nonlinear stability estimates of the approximate blowup profile rely on three main components in the dependency tree in Figure \ref{fig:tree}. The most difficult part is establishing linear stability, corresponding to establishing $\lambda > 0$ in \eqref{eq:intro_EE:a}.
By constructing an approximate profile with sufficiently small residual error ($\e \ll 1$ in \eqref{eq:intro_EE}), we treat the nonlinear term and residual error perturbatively (corresponding to
$C E(t)^2$ and  $\e$ in \eqref{eq:intro_EE}).

We do not attempt to reproduce the full stability proof or review the main steps  in 
\cite{ChenHou2023a,ChenHou2023b}, 
but instead explain  why singular weights, weighted $L^\infty$ and $C^{1/2}$ spaces, optimal transport, and finite-rank perturbations are all needed for establishing linear stability. 
Then we formulate the analytic low-rank correction, which ensures that the numerical constructions and their induced errors satisfy the required vanishing conditions and lie in the appropriate weighted spaces.
For a review of the main steps in the proof \cite{ChenHou2023a,ChenHou2023b}, we refer to 
\cite{chen2025singularity}.

\subsection{Singularly weighted estimates and outgoing condition}
\label{sec:outgo}

In Section \ref{sec:intro_linf}, we have  discussed the weighted $L^{\infty}$ estimate 
in the model \eqref{eq:modelToy}, whose coefficients approximate the $(\om, \eta)$ subsystem \eqref{eq:modelToy} near $ \xx =0 $ to the leading order.  We have also explained the advantages of employing weighted $L^\infty$-type spaces over weighted $L^2$ or $H^k$ spaces for the stability analysis. To avoid redundancy, we do not repeat this discussion below. The weighted $L^{\infty}$ stability analysis 
of $\xi$ in \eqref{eq:xifull} near $\xx=0$ is obtained similarly.

To extend the stability analysis of the local parts near $x=0$ in Section \ref{sec:intro_linf} to the linear operator in \eqref{eq:lin} in $\R_2^+$, we use a crucial outgoing property of the profile  identified in  \cite[Section 2.4]{ChenHou2023a}
\beq\label{eq:outgoing}
 \bar c_l x + \bar u(x, y) \geq c_1 x ,  \quad 
  \bar c_l y + \bar v(x, y) \geq c_2 y , \quad \forall \, x, y \geq 0.
\eeq

Below, we show that the property \eqref{eq:outgoing} can generate \textit{arbitrary strong} damping terms in $|x| \in [R_1, R_2]$ for any $0< R_1< R_2$. Denote $W = (\om, \eta, \xi)$. The linearized equations \eqref{eq:lin} without nonlinear terms and error terms can be written  schematically as 
\beq\label{eq:full_scheme_sys}
  \pa_t W + \bbb(\xx) \cdot \na W = \overline M \cdot W + \cL_{nloc} \om,\quad 
  \bbb(\xx) = \bar c_l \xx + \bar \uu ,
\eeq
where $\overline M(\xx) \in \R^{3 \times 3}$ denotes the coefficient matrix for the local terms, e.g. 
$\bar c_{\om} \om, (2 \bar c_{\om} - \bar u_x) \eta$, and $\cL_{nloc} \om$ denotes the nonlocal terms, including the $c_{\om}$ terms, which depend only on $\om$.

Multiplying both sides by a weight $\vp$ and commuting $\vp$ with the transport term, we obtain 
\beq\label{eq:damp_sys}
  \pa_t (W \vp) + \bbb(x) \cdot \na ( W \vp ) = (\overline M + d_{\vp}(x) \Id ) \cdot (W \vp)  
  + \vp \cL_{nloc} \om , \quad 
    d_{\vp}(x) \teq (\bbb \cdot \na \vp) \vp^{-1},
\eeq
where $d_{\vp}(x) \cdot W \vp$ comes from the commutator. We extract damping terms from $d_{\vp}(x)$ and design 
\beq\label{eq:design_wg}
  \vp = \vp_s \vp_0^m , \quad \vp_s = |\xx|^{-\al} + |\xx|^{\b}, 
  \quad \vp_0 \asymp 1 ,
\eeq
with $\al > 0, m > 0$. 
A direct calculation yields 
\beq\label{eq:lin_wg}
  d_{\vp} = ( \bbb \cdot \na \vp_s ) \vp_s^{-1}
  +  m ( \bbb \cdot \na \vp_0) \vp_0^{-1} \teq d_1( \vp_s ) + m d_2(\vp_0). 
\eeq

From \eqref{eq:lin_wg}, the damping terms arising from the weights can be decomposed into 
the asymptotic part $d_1(\vp_s)$, which depends on the exponents $\al$ and $ \b$,
and the term $m d_2(\vp_0)$, which governs the $O(1)$ region away from $0$. The outgoing condition \eqref{eq:outgoing} generates a strong stability mechanism, quantified as follows.

\begin{lemma}[\bf Arbitrary strong damping]\label{lem:damp}
Let $d_2(\vp_0)$ be the coefficient defined in \eqref{eq:lin_wg}. For any $0 < R_1 < R_2, D > 0$, there exists a smooth weight $ \vp_0\asymp_{R_1, R_2} 1$ depending on $R_1, R_2$ and $ m = m(R_1, R_2, D) >0$ depending on $R_1, R_2, D$ such that 
\[
m d_2(\vp_0) \leq  - D \one_{ R_1 \leq |x| \leq R_2} .
\]
\end{lemma}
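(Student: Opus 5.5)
We construct $\vp_0$ explicitly as a function of a single variable and use the outgoing condition \eqref{eq:outgoing} to fix the sign of $\bbb\cdot\na\vp_0$. The natural choice is to make $\vp_0$ depend only on a quantity that increases along the flow, for instance $\rho = |\xx|$, or a weighted version such as $\rho=(x^2+\kappa y^2)^{1/2}$. By \eqref{eq:outgoing}, with $x,y\ge 0$ in the symmetric reduction,
\[
\bbb\cdot\na |\xx|^2 = 2x(\bar c_l x+\bar u)+2y(\bar c_l y+\bar v)\ \ge\ 2\min(c_1,c_2)\,|\xx|^2 .
\]
So $\bbb\cdot\na\rho \ge c\,\rho$ with $c=\min(c_1,c_2)>0$. (We assume $c_1,c_2>0$, which is the content of the outgoing property.)

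We then set $\vp_0=\psi(\rho)$, where $\psi$ is smooth and nonincreasing, with $\psi\equiv 1$ for $\rho\le R_1/2$, $\psi\equiv \psi_\infty>0$ for $\rho\ge 2R_2$, and $\psi'(\rho)\le -\delta<0$ on $[R_1,R_2]$. A concrete choice is $\psi=\exp(-\chi)$, where $\chi$ is a smooth nondecreasing function with $\chi'\ge 1$ on $[R_1,R_2]$ and $\chi'\ge 0$ everywhere. Then $\vp_0\asymp_{R_1,R_2}1$, since $\chi$ is bounded by a constant depending on $R_1,R_2$.

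Next we compute
\[
d_2(\vp_0)=\frac{\psi'(\rho)}{\psi(\rho)}\,\bbb\cdot\na\rho=-\chi'(\rho)\,\bbb\cdot\na\rho .
\]
Because $\chi'\ge0$ and $\bbb\cdot\na\rho\ge c\rho\ge 0$, we get $d_2\le 0$ everywhere, so no growth is introduced anywhere. On $R_1\le\rho\le R_2$ we have $d_2\le -cR_1$. Choosing $m=D/(cR_1)$ gives $m\,d_2(\vp_0)\le -D\,\one_{R_1\le|\xx|\le R_2}$. Note that $m$ depends on $D,R_1$ and on $c$, which is fixed by the profile.

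The main obstacle is making sure $\bbb\cdot\na\rho\ge0$ holds globally rather than only near the region of interest. This comes down to applying \eqref{eq:outgoing} in each quadrant using the symmetry of $\bar u,\bar v$: $\bar u$ is odd in $x$, and the problem is posed on the half space $y\ge0$. We also need to handle the origin, where $\rho$ is not smooth. This is harmless because $\psi$ is constant near $0$.
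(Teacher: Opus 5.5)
Your proof is correct and follows essentially the same route as the paper. Both use a radial, nonincreasing weight that equals $1$ near the origin, is constant for $|\xx|\ge 2R_2$, and has $\pa_r\vp_0$ bounded away from zero on $[R_1,R_2]$; combined with the outgoing condition \eqref{eq:outgoing} this gives $\bbb\cdot\na\vp_0\le \min(c_1,c_2)\, r\,\pa_r\vp_0$, and then $m$ is taken large. Your direct check that $\bbb\cdot\na|\xx|\ge \min(c_1,c_2)|\xx|$, including the extension to $x<0$ via the odd/even symmetry of $\bar u,\bar v$, is slightly more careful than the paper's sign argument, which is stated only for $x,y\ge 0$.
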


A similar lemma is presented in our review paper \cite[Lemma 2]{chen2025singularity}.

\begin{proof}
There are many ways to design $\vp_0$. We consider a radial weight $\vp_0( \xx) = \vp_0( |\xx|)$ with 
\[
  \vp_0(\xx)= 1, \  |\xx| \leq R_1/2, \quad \vp_0(\xx) = 1/2 ,  \ |\xx| \geq 2 R_2, 
  \quad \pa_r \vp_0 \leq 0 \  \forall   \xx \in \R_2^+, 
  \quad \pa_r \vp_0 < 0,  \ |\xx| \in [R_1, R_2] ,
\]
where $\pa_r$ denotes the radial derivative. Then we obtain $ \pa_x \vp_0(\xx) \leq 0, \pa_y \vp_0(\xx) \leq 0$.  Suppose $ \pa_r \vp_0 \leq - c(R_1, R_2) \one_{[R_1, R_2]}( |\xx| )$. Using \eqref{eq:outgoing}, we obtain
\[
 ( \bbb \cdot \na \vp_0 ) \vp_0^{-1} 
 \leq ( c_1 x \pa_x \vp_0 + c_2 y \pa_y \vp_0 ) \vp_0^{-1}
 \leq \min( c_1, c_2 ) ( r \pa_r \vp_0( \xx) ) \vp_0^{-1} \leq  - C(R_1, R_2)  \one_{[R_1, R_2]}( |\xx| ),
\]
with some constant $C(R_1, R_2)> 0$. Taking $m$ sufficiently large, we complete the proof.
\end{proof}

The effect of the damping terms $d_1(\vp_s)$ near $\xx = 0$ follows from the discussion in Section \ref{sec:intro_linf}. 
In particular, by choosing $\al > \al_*$ for a threshold $\al_*$ determined by the coefficients 
near $0$ , we obtain a damping term near $\xx=0$.
For $\xx$ large enough, due to the decay of the coefficients $ |\na \bar \uu |, |\na \bar \th|, 
\bar \om, |\bar \om| \to 0$, the system \eqref{eq:lin} is effectively governed by 
a system similar to \eqref{eq:modelToy}. Using an argument similar to Section \ref{sec:intro_linf}, 
we determine the range of $\b$ that generates damping terms.  

Thus, combining the effects of $d_1(\vp_s)$ and $md_2(\vp_0)$ from Lemma \ref{lem:damp}, we obtain a coefficient matrix 
$\mathsf{H} = \overline M + d_{\vp}$ whose diagonal entries are negative and which is weighted diagonally dominant: 
\beq\label{eq:diag_domainte}
   - (\overline M_{ii} + d_{\vp,i} ) - 
   \sum\nolimits_{j\ne i} |\overline M_{ij}|  \mu_i \mu_j^{-1} \geq \lam ,
\eeq
for some $\mu_1, \mu_2, \mu_3 > 0$. The above condition implies exponential decay 
of the energy $\max_i( \mu_i  \| W_i \vp \|_{L^{\infty}} )$ 
for the local system of \eqref{eq:full_scheme_sys} (See \cite[Lemma A.1]{ChenHou2023a})
\[
  \pa_t W + \bbb(\xx) \cdot \na W = \overline M \cdot W.
\]

In \cite{ChenHou2023a}, since the variables $\om$ and $\eta, \xi$ have different far-field asymptotics, we design $\al, \b$ in \eqref{eq:design_wg} and different weights $\vp_i$
for $\om, \eta, \xi$ to capture these asymptotics, and achieve the diagonally dominated condition 
\eqref{eq:diag_domainte} for linear stability.  At the top order, we perform the weighted $C^{1/2}$ estimates for $\om \psi_1, \eta \psi_2, \xi \psi_3$ in \cite{ChenHou2023a} to close the stability estimates, as motivated in Section \ref{sec:intro_linf}. The main part of the energy 
(corresponding to $E$ in \eqref{eq:intro_EE}) consists of 
\beq\label{eq:EE_top}
E_M(f) =\max( \max_{i\leq 3} || f_i \vp_i ||_{L^{\infty}}, 
\max_{i\leq 3}|| f_i \psi_i ||_{C_{g_i}^{1/2}} ),  \quad
 \| F \|_{C_{g_i}^{1/2} }
\teq || (F(\xx) - F(\yy)) g_i(\xx-\yy) ||_{L^{\infty}(\R^2_+ \times \R^2_+)},
\eeq
for some singular weights $\vp_i, \psi_i$, where $f = (\om,\eta,\xi)$ and $g_i(h)\asymp |h|^{-1/2}$ is $- \f{1}{2}$ homogeneous
and 
$|| \cdot ||_{C_{g_i}^{1/2} }$ is a seminorm equivalent to the $C^{1/2}$ seminorm.

For more discussion, we refer to 
\cite[Section 2]{ChenHou2023a} and the review 
\cite[Section 3.3.1-3.5]{chen2025singularity}.

\subsection{Estimate of nonlocal terms}
\label{sec:nonlocal}

While the weighted estimate generates sufficiently strong damping terms  $ (\overline M + d_{\vp}(x) \Id) \cdot (W\vp)$ \eqref{eq:damp_sys}, the estimate of nonlocal terms $\uu \vp$ and $(\nabla \uu) \vp$ 
in $\vp \cL_{nloc} \om$ \eqref{eq:damp_sys} \emph{depends} on $\vp$ and cannot be treated perturbatively relative to the local damping terms. 
Among these, the most difficult term is $u_x\bar\theta_x$ in \eqref{eq:etafull}, 
as the remaining $\na \uu$ terms have smaller coefficients, and the $\uu$ terms are more regular and can be treated as compact perturbations to the local operator in \eqref{eq:lin}.  Below, we focus on the top order weighted $C^{1/2}$ estimates.

 Denote by $u_x(x, a, b)$ the localized version of $ u_x$ 
\beq\label{eq:ux_loc_idea}
\bal
 u_{x}(\om) (x, a, b)  & \teq  - \f{1}{\pi} P.V. \int_{  |x_1-y_1| \leq a, | x_2 - y_2| \leq  b }K_1(x - y)  W(y) dy ,  \quad  K_1(s) = \f{s_1 s_2}{|s|^4},
\eal
\eeq
where $W$ is an odd extension of $ \om$ in $y$ from $\R_2^+$ to $\R^2$. To estimate  $u_{x} \psi$ with weight $\psi$, 
\footnote{
In the top order energy estimates for \eqref{eq:EE_top}, we require $C^{1/2}$ estimates 
for $ (\na \uu) \psi$, where $\psi = \psi_1$ is the same weight 
used in the $C^{1/2}$ estimate for $\om$ in \eqref{eq:EE_top}.
This weight $\psi$ differs from $\vp$ in the weighted $L^{\infty}$ estimates in Section \ref{sec:outgo}.
} 
we isolate the most singular part by decomposing  $ u_x =  u_{x, S} +  u_{x, R}$, where $u_{x, S}(x) = u_x (x, a, b)$ captures the singularity at $y=x$ and  $u_{x, R}$ has a non-singular kernel and is more regular.
 For the singular part, we observe that the commutator 
\[
\bal
u_x(\om)(x,a,b) \psi(x) - u_x( \om \psi)(x,a,b)  
= - \f{1}{\pi} \int_{  |x_1-y_1| \leq a,   | x_2 - y_2| \leq  b  } K_1(x-y) W(y)( \psi(x) - \psi(y)) dy 
\eal
\]
is more regular. Therefore, we have the decomposition 
\beq\label{eq:ux_commu1}
u_x(\om)(x, a, b) \psi = u_x( \om \psi)(x, a, b) +
( u_x(\om)(x,a,b) \psi(x) - u_x( \om \psi)(x,a,b)   )
\teq I_1 + I_2.
\eeq

Given that $\om \vp \in L^{\infty}$ with some weight $\vp$, since $K_1(x-y) (\psi(x) - \psi(y)$ has a singularity of order $\f{1}{|x-y|}$, $I_2$ is log-Lipschitz and thus more regular than $I_1$. 
Thus, in the weighted $L^{\infty}$ and $C^{1/2}$ estimates for the energy \eqref{eq:EE_top}, the terms $I_2$ and $u_{x,R}$ are log-Lipschitz, and hence more regular than 
the perturbation $\om, \eta, \xi$, which is $C^{1/2}$ in the energy space. 
For these regular parts, we approximate them by a finite-rank operator $\cK_N$
and further estimate the operator $\cK_N$ using the 
quantitative finite-rank perturbation method in Section \ref{sec:finite_rank}.

\subsubsection{Sharp H\"older estimate via optimal transport  }

For the singular part 
$u_x(\om)(x, a, b)$ in \eqref{eq:ux_loc_idea}
and $I_1$ in \eqref{eq:ux_commu1},  we make the key observation that the H\"older estimate of $\na \uu = \na \na^{\perp}(-\D)^{-1} \om$ with a sharp constant is related to an optimal transport problem. 
In the $C_x^{\al}$ estimate 
\footnote{
\label{foot:Cx}
We refer to $C_x^{\al} $ estimate as estimating $ |f(P) - f(Q)| / |P-Q|^{\al}$ for $P, Q \in \R_+^2$ with $P_2 = Q_2$ but $ P_1 \not= Q_1$. We define $C_y^{\al} $ similarly and denote by $[f]_{C_x^{\al}}, [f]_{C_y^{\al}}$ the seminorms.
}
of $ u_x(x,0) -  u_x(z,0)$ with $\al \in (0, 1)$, using the translation and scaling symmetries, we only need to estimate 
\beq\label{eq:hol_est}
\bal
S & = u_x(\f12,  0) - u_x(-\f12, 0)  = - \f{1}{\pi} P.V. \int_{ \R^2} k(s) W( s_1,  - s_2 ) ds , \\
\eal
\eeq
in terms of $|| W||_{\dot C_x^{\al}}$, where $k(s) = K_1(\f12 -s_1, s_2) - K_1(-\f12-s_1, s_2)$ with $K_1$ from \eqref{eq:ux_loc_idea}
and satisfies $P.V. \int k(s) ds  = 0$.  An estimate of $S$, using $|| W||_{\dot C^{\al}}$, is equivalent to estimating the transportation cost of moving the positive region of $k(y)$ with measure $k^+(y) dy $ to its negative region with measure $k^-(y) dy$ with the \textit{concave} cost function $c(x, y) = |x - y|^{\al} $.
Thus, to obtain a sharp estimate of $S$ \eqref{eq:hol_est}, we seek a measurable transport map 
to obtain a cost as small as possible. We refer to \cite[Section~3.2]{ChenHou2023a} for further discussion of this connection.

For nonlocal operators in 1D with kernel $f$ (e.g., the Hilbert transform), we derive the optimal estimate and the equation of the map:

\begin{lemma}[Lemma 3.6 \cite{ChenHou2023a}]\label{lem:trans}
Suppose that there exists $c \in (a, b)$ such that $f < 0$ on $(a,c)$, $f > 0 $ on $(c, b)$, 
$f |x-c|^{\al} \in L^1_{loc}$ with $\int_a^b f(x) dx = 0$.
For $\al \in (0, 1), g \in C^{\al}(a, b)$, we have
\[
\bal
 \B|\int_a^b f(x) g (x) dx \B| & \leq \int_c^b |f(x)| |x - T(x)|^{\al} dx \cdot || g||_{\dot C^{\al}} ,\\
 \eal
\]
where $T(x)$ solves  $\int_{x}^{T(x)} f(s) ds = 0$.
\end{lemma}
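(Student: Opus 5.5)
The approach is to write the integral as a transport of the positive mass of $f$ on $(c,b)$ onto the negative mass on $(a,c)$ via the monotone map $T$, and then use the H\"older seminorm on each transported pair. First I would define $T$. Set $F(x)=\int_c^x f(s)\,ds$. Since $f<0$ on $(a,c)$ and $f>0$ on $(c,b)$, $F$ is strictly increasing from $0$ on $(c,b)$ and strictly decreasing toward $0$ on $(a,c)$. Also $\int_a^b f=0$ gives $F(a)=F(b)$. Hence for each $x\in(c,b)$ there is a unique $T(x)\in(a,c)$ with $F(T(x))=F(x)$, which is exactly $\int_x^{T(x)} f=0$. The map $T$ is continuous and decreasing, with $T(c^+)=c$ and $T(b^-)=a$. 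Differentiating the identity $F(T(x))=F(x)$ gives $f(T(x))\,T'(x)=f(x)$, i.e.\ $|f(T(x))|\,|T'(x)|=f(x)$. So $T$ pushes the measure $f^+dx$ on $(c,b)$ forward to $f^-dx$ on $(a,c)$.

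Next I would use this change of variables. For $y=T(x)$,
\[
\int_a^c f(y)g(y)\,dy=-\int_c^b |f(T(x))|\,|T'(x)|\,g(T(x))\,dx=-\int_c^b f(x)\,g(T(x))\,dx .
\]
Therefore
\[
\int_a^b fg\,dx=\int_c^b f(x)\bigl(g(x)-g(T(x))\bigr)dx .
\]
Bounding $|g(x)-g(T(x))|\le \|g\|_{\dot C^\al}|x-T(x)|^\al$ then yields the stated inequality with $|f(x)|=f(x)$ on $(c,b)$.

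The main obstacle is justifying this computation when $f$ is only locally integrable against $|x-c|^\al$. In that case $\int_a^b fg$ may need to be read as the limit of truncations away from $c$ (in principal value form), and $T$ may fail to be differentiable. I would first treat $g$ as $g-g(c)$, which is harmless since $\int f=0$, so that $|g-g(c)|\le\|g\|\,|x-c|^\al$ makes $fg$ integrable by the hypothesis. For the change of variables I would avoid pointwise differentiability of $T$ and instead use the pushforward identity. Since $T$ is monotone, for an interval $(x_1,x_2)\subset(c,b)$ we have $\int_{T(x_2)}^{T(x_1)}|f|=\int_{x_1}^{x_2}f$, directly from the definition of $T$. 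Thus $T_\#(f^+\,dx)=f^-\,dx$ as measures, so $\int_a^c f^- h\,dy=\int_c^b f^+\,h\circ T\,dx$ for bounded measurable $h$. I would apply this with $h=(g-g(c))$ truncated, then pass to the limit by monotone/dominated convergence using $|x-T(x)|\ge|x-c|$, which makes the right-hand side bound dominate the truncation errors. If the right-hand side is infinite, the inequality is trivial.
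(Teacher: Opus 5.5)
Your proof is correct and follows the same transport argument as Lemma 3.6 of \cite{ChenHou2023a}; the present paper only quotes that lemma and does not reprove it. As there, the decreasing map $T$ pushes $f^+\,dx$ on $(c,b)$ onto $f^-\,dx$ on $(a,c)$, so $\int_a^b fg=\int_c^b f(x)\bigl(g(x)-g(T(x))\bigr)dx$, and the H\"older bound finishes the estimate. One small point: when $f$ is not integrable at $c$, your $F(x)=\int_c^x f$ is infinite, so $T$ should be defined by the equivalent tail relation $\int_x^b f=\int_a^{T(x)}|f|$ (the principal-value reading of $\int_x^{T(x)}f=0$), which is what your pushforward identity and truncation step actually use.
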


The bound is given by some explicit integral depending on the map. To estimate 
$S$ in \eqref{eq:hol_est}, we analyze the sign of $k(s)$ in \eqref{eq:hol_est} for each fixed $s_2$ and apply Lemma \ref{lem:trans} horizontally to construct the transport map $T(s)$ in $\R^2$, which solves a cubic equation. \footnote{
For fixed $\xx, \zz$, we consider $K_{\D}(y) = K(\xx- \yy) - K(\xx- \zz)$ with kernel $K$ 
associated with the velocity operator $\pa_{ij}(-\D)^{-1}$. 
Using Lemma \ref{lem:trans}, we solve $T(a)$ from $\int_a^{T(a)} K_{\D}(y) d y_i =0 $ for $i=1$ or $i=2$, which reduces to a cubic equation of $T(a)$ when $x_1 = z_1$ or $x_2 = z_2$. Moreover, using translation and scaling symmetries in the H\"older estimate, we can reduce the pairs $(\xx, \zz)$ with $x_i = z_i$ depending on three parameters (e.g. $x_1, z_1, x_2$) to at most one parameter.
}
For any $\xx , \zz\in \R^2_+$ with $x_2= z_2$,  we can obtain
\beq\label{eq:hol_sharp}
  |u_x(\om)(\xx) - u_x(\om)(\zz) | \leq C_* |x_1 - z_1|^{ 1/2} 
  [ \om ]_{C_x^{1/2}} ,
\eeq
where $ [ \cdot ]_{C_x^{1/2}}$ denotes the seminorm (see Footnote \ref{foot:Cx}). The explicit formula of $C_*$ with $C_* \leq 2.55$ and a stronger version of \eqref{eq:hol_sharp} were established in \cite[Lemma 3.1]{ChenHou2023a}.
We develop similar sharp $C_x^{1/2}, C_y^{1/2}$ estimates (see Footnote \ref{foot:Cx})
for other terms in $\na \uu$ using Lemma \ref{lem:trans}. Using the triangle inequality, we obtain the H\"older estimates in $\R^2_+$. We refer the reader to the details of these estimates to \cite[Section 3 \& Appendix B]{ChenHou2023a}.

We generalize the \emph{unweighted} H\"older estimates to localized ones for $u_x(\om)(x, a,b )$ in \eqref{eq:ux_loc_idea},
and further to \emph{weighted} estimates with the \emph{same} constant, \emph{independent} of the weight, using the decomposition \eqref{eq:ux_commu1}, up to more regular terms depending on the weight, 
e.g. $I_2$ in \eqref{eq:ux_commu1}. The sharp H\"older estimates allow us to obtain a larger stability factor, corresponding to $\lam$ in \eqref{eq:intro_EE}, and reduce the number of rank 
in the finite-rank perturbation method in Section \ref{sec:finite_rank} significantly.

\begin{remark}[\bf Constant independent of $\psi$]

It is crucial to commute $\psi$ with the nonlocal operator in \eqref{eq:ux_commu1} so that the constant in the $C^{1/2}$ estimate of $u_x(\om\psi)(x,a,b)$ in terms of $\om\psi$ is \emph{independent} of $\psi$. Using the large damping terms $d_{\vp}(\xx)$ 
(and similarly $d_{\psi}(\xx)$) in \eqref{eq:damp_sys}  generated by the weights in Section \ref{sec:outgo}, we treat $u_x(\om\psi)(x,a,b)$ and related top order terms perturbatively, which in turn yields stability at the top-order level.
The more regular terms are estimated using the finite-rank perturbation method.

\end{remark}

\section{Principle: numerical construction and analytic corrections}\label{sec:principle}

In the remainder of the paper, we focus on the analytic low-rank correction method, as motivated in Section \ref{sec:analytic_low_rank}.
The argument has three conceptually distinct steps.

\vs{0.05in}

\paragraph{\bf Step 1: Numerical construction}
We construct numerical approximations for the profile, the nonlocal velocity,  and  auxiliary
solutions to the linearized equation.  These functions are represented globally
using \emph{explicit} basis functions with sufficient regularity, for example
$C^{4,1}$ B-splines or related piecewise polynomial bases.  We further derive various functions as  linear combinations or products of these functions. The numerical computations determine

\begin{itemize}[leftmargin=2.5em]
\item coefficients of basis functions in various representations, e.g. the profile and  numerical approximations of the stream function;

\item rigorously controlled values or bounds for residual errors and certain derivatives of the represented functions that enter the analytic correction.
\end{itemize}

Once the numerical coefficients are fixed, they define explicit analytic 
functions through the chosen basis representation. 
From that point onward, the following steps are analytic.

\vs{0.05in}

\paragraph{\bf Step 2: Analytic corrections}
The raw numerical objects need not satisfy the exact vanishing conditions required by
the singular weights in the weighted energy estimates. We therefore perform \emph{analytic}, \emph{low-rank}
corrections near $\xx=0$ by subtracting the corresponding localized Taylor expansion polynomial near $\xx=0$,
which involves the following analytic steps 

\begin{itemize}[leftmargin=2.5em]
\item Taylor expansion near the origin,
\item construction of analytic correction modes,
\item decomposition of the perturbation and equations.
\end{itemize}

Because the functions constructed numerically in Step 1 are represented by explicit basis expansions, the needed derivatives of these functions at the origin are defined analytically through the basis functions. The exact vanishing conditions are properties of the corrected function, not of the raw numerical approximation.

\vs{0.05in}

\paragraph{\bf Step 3: Weighted estimates for the corrected functions}
After the analytic corrections, the corrected functions satisfy exact vanishing conditions. 
Hence the weighted estimates near $\xx=0$ reduce to bounding derivatives of the corrected
functions, and these derivatives can be estimated from the basis
representation and the numerical coefficients rigorously.

\section{A model equation for the low-rank correction}\label{sec:model}

In this section, we consider the following model problem. Suppose that the scalar function $W(t, \xx), \xx = (x, y) \in \R \times \R_+$ satisfies the following equation 
\beq\label{eq:lin_model}
\pa_t W = \cL W,
\quad   \cL W \teq - \overline V(\xx) \cdot \na W + \bar c(\xx)  W +  \overline F(\xx)  P(W) ,
\eeq
where $\cL$ is the linear operator, $\overline V= (\overline V_1, \overline V_2)$ is a vector-valued function, the function $P(W)$ depends on $W$ via a rank-one condition
\[
   P(W) = \int_{\R^2_{++}} W( \yy ) \bar p(\yy) d \yy,
\]
for some function $\bar p(\yy)$, $\bar c$ is a scalar coefficient, and $\overline F$ is the error term. We assume that $W, \overline V_1, \overline F$ are odd in $x$, $\overline V_2$ is even in $x$, and $\overline F, \overline V$ satisfy
\beq\label{eq:van_F}
  \overline F( \xx) = O( |\xx|^2), \quad \pa_{xy} \overline F(0) \neq 0,
  \quad \overline V_1( \xx) = c_1 x + O(|\xx|^2),
  \quad \overline V_2( \xx) = c_2 y + O(|\xx|^2),
\eeq
near $ \xx = 0$. The transport term $\overline V \cdot \na  W $ in \eqref{eq:lin_model} models the transport term $(\bar c_l \xx + \bar \uu) \cdot \na   $ in \eqref{eq:lin}.

For the model problem \eqref{eq:lin_model}, we assume that the functions $\overline V(\xx), \bar c(\xx), \overline F(\xx)$ are represented as linear combinations of various \emph{explicit} and smooth enough basis functions $f_i(\xx)$ :
\bseq\label{eq:ass_f_bar}
\beq\label{eq:ass_f_bar:a}
 \bar f(\xx) =  \sum\nolimits_{i \leq n_f}  c_i f_i(\xx) , \quad \mbox{for \ } \bar f = \overline V, \bar c , \overline F ,
\eeq
with $n_f < \infty$, or the linear combinations or products of the basis functions 
\footnote{
One can expand the form \eqref{eq:ass_f_bar:b} to obtain the expression \eqref{eq:ass_f_bar:a}.
}
\beq\label{eq:ass_f_bar:b}
   \bar f = \sum_{n \leq N } (  \sum_{i \leq n_1} c_{n, i} f_{n, i}(\xx) ) \cdot (  \sum_{i \leq n_2} d_{n, i} g_{n, i}(\xx) ) ,
\eeq
with $C^{4,1}$ basis functions $f_{n, i}(\xx), g_{n, i}(\xx)$.
\eseq

In \cite{ChenHou2023a}, examples of basis functions $f_i(\xx)$ include 6-th order B-spline functions, which are piecewise polynomials with compact support, and smooth cutoff functions. Due to the regularity of the basis functions, for \emph{any} coefficients $c_{n,i}, d_{n,i}$, we have $\bar f \in C^{4, 1}$.  Given the basis functions $f_i(\xx)$, the numerical construction of profiles or approximate solutions determines the coefficients $c_i$, which are real numbers in $\R$. 
We emphasize that these functions $\overline V, \bar c, \overline F$ are defined everywhere, not just 
\emph{on the grid points} that are used for numeric discretization.

We consider initial perturbation with cubic vanishing order near $\xx=0$
\beq\label{eq:init_van}
  W_0(\xx) = O(|\xx|^3) .
\eeq
Due to the nonlocal part $P(W) \overline F$ and \eqref{eq:van_F}, equation \eqref{eq:lin_model} only preserves quadratic vanishing order $W(t, \xx) = O(|\xx|^2)$ near $\xx=0$.

\vs{0.1in}
\paragraph{\bf Decomposition}

 To improve the vanishing order of $W$ near $\xx=0$, 
 we track the Taylor expansion of various functions near $\xx=0$.
 Let $\chi(\xx)$ be a $C^{4,1}$ function, odd in $\xx$, with compact support and satisfying
 \beq\label{def:chi}
   \chi(\xx) = x y + O( |\xx|^3) .
 \eeq

We decompose $W = W_1 + W_2$ with $W_i$ solving 
 \bseq\label{eq:lin_W}
 \begin{align}
    \pa_t W_1 + \overline V(\xx) \cdot \na W_1 & = \bar c(\xx) W_1 + ( \overline F - \overline F_{xy}(0) \chi ) P(W_1), 
    \label{eq:lin_W1} \\
  \pa_t W_2 + \overline V(\xx) \cdot \na W_2 & = \bar c(\xx) W_2 +  \overline F P(W_2) + \overline F_{xy}(0) \chi \cdot P(W_1),   \label{eq:lin_W2}   
\end{align}
 \eseq
 with initial data 
 \[
         W_1 |_{t = 0} = W_0, \quad W_2 |_{t = 0} = 0.
 \]

Since $\overline F$ is odd in $x$, we obtain $\pa_x^2 \overline F(0) = \pa_y^2 \overline F(0) = 0$. 
Moreover, since $\overline F$ has the analytic form \eqref{eq:ass_f_bar}, we can define 
the derivatives of  $ \pa^{\al} \overline F(0)$ analytically by applying the derivatives to the basis functions $f_i$, e.g. $\pa^{\al} f_i(\xx)$. Therefore, by a Taylor expansion, we obtain the vanishing order
\beq\label{eq:F_cor1}
 \overline F -  \pa_{xy} \overline F(0) \chi = O(|\xx|^3),
\eeq
near $\xx=0$ analytically. Note that we \emph{do not} need to compute the numerical value of $\pa_{xy} \overline F(0)$ to obtain the above vanishing order. The derivative is defined analytically through the basis representation, e.g.
\beq\label{eq:F_cor1_analy}
 \pa_{xy} \B( \sum\nolimits_{i \leq n_f} c_i f_i(\xx)  \B) \B|_{\xx=0}
 = \sum\nolimits_{i \leq n_f} c_i \pa_{xy} f_i(0).
\eeq
Therefore, the vanishing condition \eqref{eq:F_cor1} follows from the representation \eqref{eq:ass_f_bar} and the regularity of the basis, which is an exact analytic statement.

This is the first important lesson of the toy model: the correction is not
performed on \emph{raw numerical values}, but on a \emph{function}
$\overline F$ represented explicitly in a basis.

As a result, the vanishing order $W_1(t, \xx) = O(|\xx|^3)$ is preserved by \eqref{eq:lin_W1}. In other words, the bad quadratic mode in $\overline F$ has been removed analytically in the equation for $W_1$. On the other hand, equation \eqref{eq:lin_W2} still carries the delicate nonlocal forcing and therefore preserves only quadratic vanishing order $W_2(t, \xx) = O(|\xx|^2)$.

\subsection{Duhamel formula and computer-assisted construction}

We perform weighted energy estimates for $W_1$ in \eqref{eq:lin_W1}. For $W_2$, using the Duhamel formula, we obtain
\beq\label{eq:W2_form}
  W_2(t) = \int_0^t e^{\cL (t- s)}  \overline F_{xy}(0) \chi P(W_1(s)) d s 
  = \int_0^t  \overline F_{xy}(0)  P(W_1(s)) \cdot e^{\cL (t- s)} \chi  d s ,
\eeq
where $\cL$ is the linear operator defined in \eqref{eq:lin_model}. 
Note that we do not need to construct and estimate the \emph{full} semigroup $e^{\cL s}$ generated by $\cL$, but only $e^{\cL s}\chi_\dag$. Since $\cL$ is complicated, 
we cannot estimate $e^{\cL(t-s)} \chi$ directly. 
Instead, we follow the principle in Section \ref{sec:principle} by first constructing a numerical approximation, and then performing analytic corrections.

We construct a numerical 
approximation $\wh g(t, \xx)$ for $e^{\cL (t- s)} \chi$ by solving the linear equation numerically 
\beq\label{eq:lin_g}
  \pa_t g = \cL g , \quad g(0, \xx) = \chi. 
\eeq
We use $\widehat {\cdot}$ to indicate that a solution is constructed numerically. 
The solution $\wh g(t, \xx)$ has a representation similar to \eqref{eq:ass_f_bar}.  
 It is defined everywhere for $t$ and $\xx$, $g(t, \cdot)$ has spatial $C^{4, 1}$-regularity, is piecewise smooth in $t$, and is Lipschitz in $t$ globally. We discuss more details of the construction of $\wh g$ in Section \ref{sec:numer}. 

With the approximate solution $\wh g$, we modify the formula for $W_2$ as 
\beq\label{eq:W2_form_hat}
 \wh W_2 = \int_0^t  \overline F_{xy}(0)  P(W_1(s)) \cdot \wh g(t- s, \xx)  d s.
\eeq
Since $\wh g(t, \xx)$ is defined globally and is Lipschitz in $t$, we can define the above integral \emph{analytically}.
We remark that the above integration \emph{is not a numerical integration}, which typically depends on 
quadrature rules.

Due to numerical error, $\wh g$ does not solve \eqref{eq:lin_g}  \emph{exactly}, and $\wh W_2$ does not solve \eqref{eq:lin_W2} \emph{exactly}. To quantify the error rigorously,  we introduce the following residual operator associated with the solution $\wh g$ and $W_1$
\beq\label{eq:cR}
\cR( W_1) \teq P(W_1(t)) \overline F_{xy}(0) \cdot ( \wh g(0, \xx) - \chi) + \int_0^t \overline F_{xy}(0) P(W_1(s)) \cdot  (\pa_t -\cL) \wh g( t- s) d s.
\eeq
Since $\wh g(t- s, \xx)$ is defined everywhere via the globally defined basis functions and is sufficiently regular, we can define 
$(\pa_t -\cL) \wh g( t- s) $ and the above integral analytically.

Using the analytic expressions \eqref{eq:W2_form_hat} and \eqref{eq:cR}, we obtain the following equation for $\wh W_2$, defined in \eqref{eq:W2_form_hat}, \emph{exactly}
\bseq\label{eq:lin_Wm}
\beq
  \pa_t \wh W_2 = \cL \wh W_2 + \overline F_{xy}(0) \chi \cdot P(W_1) + \cR(W_1).
\eeq

To ensure that $W_1 + \wh W_2$ solves the original model \eqref{eq:lin_model}, we modify the decomposition in \eqref{eq:lin_W} by subtracting the residual operator $\cR(W_1)$ from \eqref{eq:lin_W1}
\beq
    \pa_t W_1 + \overline V(\xx) \cdot \na W_1  = \bar c(\xx) W_1 + ( \overline F - \overline F_{xy}(0) \chi ) P(W_1) 
    - \cR(W_1).
\eeq
\eseq

We remark that, given any function $\wh g$ with suitable regularity, we perform the above 
 reformulation \eqref{eq:lin_Wm} of \eqref{eq:lin_W} analytically, 
 and there is no additional numerical error 
 other than those in $\bar \cR$ 
 \footnote{
The numerical error in constructing $\wh g$ enters \eqref{eq:lin_Wm} via  $ \wh g(0, \xx) - \chi, 
(\pa_t - \cL) \wh g(t- s, \xx)$ and the operator $\cR(W_1)$ \eqref{eq:cR}.
}
 in the passage from the original decomposition \eqref{eq:lin_W} to the modified one \eqref{eq:lin_Wm}. In particular, once the approximate function $\wh g$ is fixed, the reformulation \eqref{eq:lin_Wm} is exact: no additional approximation is introduced beyond the residual terms already contained in $\wh g(0, \xx)-\chi$ and $(\pa_t-\cL)\wh g$.

\paragraph{\bf Constraints for $\wh g$}

To ensure that $W_1$ vanishes cubically near $\xx=0$ so that it belongs to the energy class, we need to construct $\wh g(t, \xx)$ with
\beq\label{eq:g_cond}
\wh g(t, \xx) = O(|\xx|^2), \quad  \wh g(0, \xx) - \chi = O(|\xx|^3), \quad   (\pa_s -\cL) \wh g( s, \xx) = O(|\xx|^3). 
\eeq

We remark that the \emph{exact} solution to \eqref{eq:lin_g} satisfies the above properties.  To achieve the above properties, we follow the principle in Section \ref{sec:principle} by first constructing a numerical approximation for $g$ to \eqref{eq:lin_g} and then perform analytic corrections.

\subsection{Approximate space-time solution and corrections}\label{sec:numer}

In this section, we explain the idea and methods in \cite[Section 4.2, 4.3]{ChenHou2023a}
and \cite[Section 3]{ChenHou2023b} to construct $\wh g$ satisfying \eqref{eq:g_cond}.
The key idea is to perform low-rank \emph{analytic} correction near $0$ using suitable Taylor expansion. The construction of $\wh g$ and the corrections is discussed in detail in \cite[Section 3]{ChenHou2023b}.

\vs{0.1in}
\paragraph{\bf Step 1. Numerical construction}

We construct the approximate solution to \eqref{eq:lin_g} numerically at discrete time $t_k = k h$ using 
representation \eqref{eq:ass_f_bar:a} with piecewise polynomial basis functions $B_{1,i}(x), B_{2,j}(y)$:
\beq\label{eq:g_0form}
   \wh g^{(0)}(t_k, x, y) = \sum\nolimits_{ij} a_{k, ij} B_{1, i}( x) B_{2, j}(y)
\eeq
and $t_k \leq T$, where $T$ is the final time up to which we construct the numerical solution. 
The subscripts $1$ and $2$ in $B_{1,i}$ and $B_{2,j}$ indicate the $x$- and $y$-directions, respectively. 
By interpolating $a_{k, ij}$ using piecewise cubic polynomials (see \cite[Section 3.4]{ChenHou2023b}), 
we extend $\wh g^{(0)}(t_k, x, y)$ to a globally defined function $ \wh g^{(0)}(t, x, y)$ with 
compact support in time $t\in [0, T]$. Note that in each time interval $[t_k, t_{k+1}]$, 
$ \wh g^{(0)}(t, x, y)$ satisfies 
\beq\label{eq:g_1form}
\wh g^{(0)}(t, x, y) = \sum\nolimits_{ij} a_{ ij}(t) B_{1, i}(x) B_{2, j}(y), 
\eeq
where $ a_{ij}(t)$ is a cubic polynomial in $t$ with coefficients being a \emph{linear combination} of $a_{k, ij}$ in \eqref{eq:g_0form}. 

We require that $B_{1, i}(x)$ is odd in $x$ so that $ \wh g^{(0)}$ is odd in $x$ 
and vanishes $O(|\xx|)$ near $\xx=0$. 
\footnote{
To construct the approximation for $\xi$ to the linearized Euler equations, we choose basis $B_{1,i}(x)$ for $\xi$ even in $x$.  
}
We choose the basis functions $B_{1,i}(x), B_{2,j}(y) \in C^{4, 1}$ so that $  \wh g^{(0)} \in C^{4, 1}$.

\vs{0.1in}
\paragraph{\bf Step 2. First analytic correction}

While the \emph{exact} solution to \eqref{eq:lin_g} vanishes $O(|\xx|^2)$ dynamically, due to numerical error, the solution $ \wh g^{(0)}$ only vanishes like $O(|\xx|)$ near the origin and does not preserve the $O(|\xx|^2)$ vanishing order. The first analytic correction is used to restore the
quadratic vanishing order that the exact solution should satisfy.

 Let $\chi_1$ be a $C^{4,1}$ function that is odd in $x$, has a compact support, and satisfies 
\[
 \chi_1(\xx) = x + O(|\xx|^3) ,
\]
near $x=0$. We perform the first \emph{analytic} correction on $ \wh g^{(0)}$:
\beq\label{eq:g_cor1}
 \wh g^{(1)}(t, \xx ) \teq  \wh g^{(0)}(t, \xx ) - \pa_x  \wh g^{(0)}(t, 0) \cdot \chi_1( \xx)  , 
\eeq
and obtain $ \wh g^{(1)} \in C^{4, 1}$. We obtain the vanishing order $ \wh g^{(1)}(t, \xx ) = O(|\xx|^2)$
for the same reason as \eqref{eq:F_cor1} and the analytic structure \eqref{eq:g_0form}.  The value $\pa_x  \wh g^{(0)}(t, 0)$ is of the order of the numerical error and is expected to be very small. 
We achieve the first condition in \eqref{eq:g_cond}.

This is exactly the place where the distinction between \emph{analytic
expression} and \emph{numerical evaluation} matters. The correction uses the exact analytic derivative of the explicitly represented function $\wh g^{(0)}$. For example, if
\[
\wh g^{(0)}(t,x,y)=\sum\nolimits_{ij} a_{ij}(t) B_{1,i}(x) B_{2,j}(y),
\]
then
\[
\pa_x \wh g^{(0)}(t,0,0)=\sum\nolimits_{ij} a_{ij}(t) (\pa_x B_{1,i})(0) B_{2,j}(0),
\]
which is an exact analytic quantity determined by the basis functions and the coefficients. What is not needed is a separate numerical evaluation of that derivative.

\vs{0.1in}
\paragraph{\bf Step 3. Second analytic correction}

While the \emph{exact} solution to \eqref{eq:lin_g} satisfies 
$g(0, \xx) - \chi \equiv 0, (\pa_s - \cL) g(s, \xx) \equiv 0$, and thus the cubic vanishing condition 
near $0$ as in \eqref{eq:g_cond}, due to numerical error, $ \wh g^{(1)}$ constructed in the first two steps does not satisfy \eqref{eq:g_cond}. The second analytic correction is used to restore the
cubic vanishing of the error that the exact solution should satisfy. 

We introduce the error parts:
\beq\label{def:err}
  \cE_0( \xx) \teq \wh g^{(1)}(0, \xx) - \chi ,  \quad  \cE(s, \xx) \teq  (\pa_s -\cL)  \wh g^{(1)}(s, \xx) .
\eeq
Since $\wh g^{(1)}(t, \xx)$ has spatial $C^{4, 1}$ regularity by \eqref{eq:g_1form} and 
\eqref{eq:g_cor1}, we obtain the spatial regularity $\cE(s , \cdot) \in C^{3, 1}$ for any $s \geq 0$ 
and $ |\pa^{\b} \cE(s)|$ is bounded in $s$ 
for any $|\beta| \leq 2$.

By definition of $\cL$ in \eqref{eq:lin_model},
the conditions in \eqref{eq:van_F}, and $ \wh g^{(1)}(t, \xx) = O(|\xx|^2)$ near $\xx=0$, 
using a Taylor expansion, we obtain the following leading order terms near $\xx=0$
\beq\label{eq:err_2nd}
  \cE_0( \xx) = \pa_{xy} \cE_0(0) x y + O(|\xx|^3), \quad  \cE(s, \xx)
  = \pa_{xy}\cE(s, 0) x y + O(|\xx|^3) . 
\eeq
The scalar $ \pa_{xy} \cE_0(0)$ and the function $\pa_{xy}\cE(s, 0)$
 depend on $\wh g^{(1)}, \overline F, \overline V, \bar c(\xx)$ and have a size of numerical error. 

We introduce a function $\chi_2 \in C^{4,1}$, that is odd in $x$, has a compact support, and satisfies 
\footnote{ 
In \cite[Section 4.2, Section 4.3]{ChenHou2023a} and more details in \cite[Section 3]{ChenHou2023b},
the nonlocal correction term $P(f)$ near $\xx=0$ is given by 
$P(f) = - \pa_{xy}(-\D)^{-1} f(0)$. We choose $\chi_2(\xx) = -\D( \f{1}{6} x y^3 \kp(x) \kp(y))$ with $\kp(x)$ being smooth cutoff function supported near $x=0$ and $\kp(x)=1$ near $x=0$ to obtain 
$\pa_{xy}(-\D)^{-1} \chi_2(0) = 0$ and $\chi_2(\xx) = x y + O(|\xx|^3)$ near $\xx=0$.
}
\beq\label{def:chi2}
\chi_2(\xx) = x y + O(|\xx|^3), \quad  P(\chi_2) = 0.
\eeq

To further eliminate the quadratic error, we perform the second \emph{analytic} correction
\beq\label{def:g2}
  \wh g^{(2)}(t, \xx) = \wh g^{(1)}(t, \xx) + a(t) \chi_2.
\eeq
From \eqref{eq:g_0form}, \eqref{eq:g_cor1}, and the above formula, we obtain that 
$\wh g^{(2)}(t, \xx)$ has the representation \eqref{eq:ass_f_bar:a} with $C^{4, 1}$ basis functions. 

\paragraph{\bf Constructing and solving $a(t)$ analytically}

We choose $a(t)$ to eliminate the quadratic parts near $\xx=0$ in \eqref{eq:err_2nd} by imposing the following \emph{analytic} condition on $a(t)$:
\beq\label{eq:van_g2}
  \pa_{xy} (   \wh g^{(2)} - \chi )|_{s=0, \xx=0 } = 0,
  \quad \big( \pa_{xy}  (\pa_s - \cL) \wh g^{(2)}\big)(s, 0) = 0.
\eeq

Due to symmetry, using \eqref{eq:err_2nd} and \eqref{def:g2}, we rewrite the above analytic conditions equivalently as 
\beq\label{eq:ODE1}
 a(0) + \pa_{xy} \cE_0(0) = 0,
 \quad  \pa_{xy} ( \pa_s - \cL) a(s) \chi_2 |_{s=0, \xx=0} + \pa_{xy} \cE(s, 0) = 0.
\eeq

For any function $f \in C^3$, odd in $x$, and $f = O(|\xx|^2)$, from \eqref{eq:lin_model}, we derive 
\[
   \pa_{xy} ( \cL  f )(0)=  (\bar c(0) - \pa_x \overline V_1(0) - \pa_y \overline V_2(0) ) \pa_{1 2} f(0)
 +  \pa_{1 2}\overline F(0) \cdot P( f ).
\]

Using the above formula,  $\pa_{xy} \chi_2(0) = 1, P(\chi_2) = 0$ by \eqref{def:chi2}, 
we rewrite equations \eqref{eq:ODE1} as
\beq\label{eq:ODE2}
\bga
 0  = \f{d}{d s} a(s) - a(s) \pa_{xy} \cL \chi_2(0)
 =  \f{d}{d s} a(s) - a(s) \big( \bar c(0 ) - \pa_x \overline V_1(0) - \pa_y \overline V_2(0)   \big)
 + \pa_{xy} \cE(s, 0), \\
 a(0)  = - \pa_{xy} \cE_0(0).
 \ega
\eeq

The above relation gives a \emph{linear} ODE in $a(s)$, which we solve \emph{analytically}
\beq\label{def:a}
  a(t) = - e^{\bar \lam t}  \pa_{xy} \cE_0(0)
- \int_0^t e^{ \bar \lam (t- s) }  \pa_{xy} \cE(s, 0 )  d s,
\quad \bar \lam = \bar c(0 ) - \pa_x \overline V_1(0) - \pa_y \overline V_2(0).
\eeq

Since $\cE(s,{\bf x}) $ has $C^{3,1}$ spatial regularity in ${\bf x}$, and $\na^{\beta} \cE(s, {\bf x})$ is bounded in $s$ for $|\beta| \leq 2$, we can define the above integral \emph{analytically}. The formula \eqref{def:a} is one of the key points in the entire corrections. It
shows that the second correction step is not another numerical approximation.
Rather:

\begin{itemize}[leftmargin=2.5em]
\item the numerics identify the defect coefficients
      $\partial_{xy}E_0(0)$ and $\partial_{xy}E(s,0)$ via \eqref{eq:err_2nd};
\item the function $a(t)$ is then obtained by an \emph{exact analytic formula} \eqref{eq:ODE2};
\item once $a(t)$ is fixed, the corrected function $\wh g^{(2)}$ is 
defined analytically for $t \in \R_+, \xx \in \R_2^+$;

\item the cubic vanishing of $\wh g^{(2)}$ follows from its Taylor expansion.
\end{itemize}

\paragraph{\bf Summary }

After we construct the raw numerical approximation $\wh g^{(0)}$ in step 1, the subsequent correction step is entirely analytic: one uses the smallness of this error to solve a \emph{perturbative} problem and eliminate the residual defect, thereby recovering the exact vanishing conditions.

We note that while $\wh g^{(1)}(t), \cE(t)$ have a compact support in $t$, due to the above 
convolution, $a(t)$ \emph{does not} have a compact support.
From the above definition, $a(t)$ depends on $\wh g^{(1)}$ via the error term $\cE_0, \cE(s)$ defined in \eqref{eq:err_2nd}, and it has a size of numerical error. 
We refer to Example 2 in Section \ref{sec:how} for the weighted estimate of the corrected error.

\begin{remark}[\bf Regularity of the basis functions for the model problem]

To perform the correction for the model problem, we only require $C^{3, 1}$ regularity for the basis functions, such as those in \eqref{eq:ass_f_bar}, in representing the coefficients $\overline V(\xx), \bar c(\xx), \overline F(\xx)$ in \eqref{eq:lin_W},
 and approximate solution $\wh g^{(0)}$ in \eqref{eq:g_0form}. 
 The resulting error, e.g. $\cE(s, \xx)$ in \eqref{def:err}, has $C^{2, 1}$ regularity and is sufficient for later estimates.
 In the above presentation, we use $C^{4,1}$-regularity for the basis function so that it is 
 consistent with the setting in \cite{ChenHou2023a,ChenHou2023b}.

\end{remark}

\begin{remark}[\bf Higher vanishing order by finite rank correction]

We can generalize the decomposition in \eqref{eq:lin_Wm} to obtain 
$W_1$ preserving higher vanishing order $O(|\xx|^k)$ with large $k$ near the origin and $\wh W_2$ constructed with computer assistance. The key idea is to perform similar higher order Taylor expansion and analytic, finite-rank correction near $\xx=0$.

\end{remark}

\subsection{How do numerical values enter the estimates}\label{sec:how}

In this subsection, we briefly explain how numerical information enters the estimates once the exact local cancellation has already been enforced analytically. We refer to \cite[Section 5.7]{ChenHou2023a} and \cite[Section 3.5]{ChenHou2023b} for more details.

In the weighted energy estimates \cite[Section 5]{ChenHou2023a}, we choose a singular weight $\vp$ with order $|\xx|^{-3}$ near $\xx=0$. For a function $f$ with vanishing order $O(|\xx|^3)$ near $\xx=0$,
for $|\xx|\leq 1$, we obtain
\footnote{
We derive all constants in the weighted estimate \emph{explicitly} in \cite{ChenHou2023b}, e.g. 
\cite[Appendix E]{ChenHou2023b}.
}
\beq\label{eq:key_weight_est}
 | |\xx|^{-3} f | \les 
  \| \na^3 f \|_{L^{\infty}( |\xx|\leq 1) } .
\eeq

Thus, to bound the weighted norm of various terms, e.g. $ \wh g(0, \xx ) - \chi , 
(\pa_s - \cL) \wh g(s, \xx)$, we only need to obtain the $C^3$ bounds of these terms. Since we use 
explicit basis functions for various terms, e.g. \eqref{eq:g_0form}, we apply the derivatives to the smooth basis functions, e.g.
\[
   \na^3 \wh g^{(0)}(t_k, x, y) = \sum\nolimits_{ij} a_{k, ij} \na^3 ( B_{1,i}(x) B_{2,j}(y) ).
\]
By choosing basis functions $B_{1,i}(x)$ and $B_{2,j}(y)$ with sufficient regularity, for example of class $C^{4,1}$, and by tracking rigorous bounds for the coefficients $a_{k,ij}$, we obtain piecewise $C^3$ bounds for the relevant functions. In particular, although the computation of $a_{k,ij}$ involves numerical and round-off errors, these errors are \emph{not} directly amplified by the singular weight, since the local corrections are performed analytically once the coefficients have been computed
and we use estimates similar to \eqref{eq:key_weight_est}. In \cite[Appendices C and D]{ChenHou2023b}, we use numerical analysis to rigorously establish piecewise derivatives bounds 
for polynomials and explicit basis functions, which are used for representing 
different functions constructed in Step 1 in Section \ref{sec:principle}. 

\vs{0.1in}
\paragraph{\bf Rigorous piecewise bounds}
We present one such estimate to illustrate the methods and ideas for rigorous piecewise bounds.
To obtain a tight piecewise bound of a function $f$ in $D_{ij} \equiv [x_i, x_{i+1}] \times [y_j, y_{j+1}] $, we have
\beq\label{eq:err_2nd_est}
\bal
  \max_{x, y \in  D_{ij}}   |f(x, y)|   \leq  \max_{ i, j = 1,2}  | f(x_i, y_j |  + \tfrac18 h_1^2   ||\pa_x^2 f||_{ L^{\infty}(D_{ij})}
+ \tfrac18  h_2^2   ||\pa_y^2 f||_{ L^{\infty}(D_{ij})} ,
\eal
\eeq
where  $ h_1 = x_{i+1} - x_i$ and $ h_2 = y_{j+1} - y_j$.
If $f$ is a  polynomial, e.g. Bspline basis functions, the high order derivatives vanish. 
Since we can evaluate the grid point values $f(x_i, y_j)$ using the representation formula
\eqref{eq:ass_f_bar}, we can recursively use \eqref{eq:err_2nd_est} to estimate the derivatives from high order to low order. For non-polynomial basis functions, we first obtain rough bounds on higher-order derivatives, and then refine the piecewise estimates using \eqref{eq:err_2nd_est} by further partitioning the domain with a finer mesh $h_i$.

Below, we use two examples to discuss the rigorous weighted estimates.

\vs{0.1in}

\paragraph{\bf Example 1}

We use a simple example to illustrate the estimates. Suppose that 
\[
  \overline F = \bar f \cdot \bar g ,
  \quad \bar f = c_1 x + \sum\nolimits_{i + j = 2} c_{ij} x^i y^j,
  \quad \bar g = d_1 y + \sum\nolimits_{i + j = 2} d_{ij} x^i y^j ,
\]
 the coefficients $c_{ij}, d_{ij}$ are given as floating point values, and $c_1, d_1 \neq 0$. 
Clearly, we have $\pa_{xy} \overline F(0) = c_1 d_1$. We aim to bound $(\overline F - \pa_{xy} \overline F(0) \chi ) |\xx|^{-3}$ for $|\xx| < 1$, where $\chi$ is a function satisfying \eqref{def:chi}.

Due to numerical error, the numerical evaluation of $\pa_{xy}\overline F(0)$ may differ from its analytic value $\pa_{xy}\overline F(0)=c_1d_1$. 
However, using the \emph{analytic} definition of $\pa_{xy} \overline F(0)$, we obtain the analytic vanishing conditions  $\overline F -\pa_{xy} \overline F(0) \chi = O(|\xx|^3)$. Thus, using the Leibniz rule, we obtain
\[
  \max_{|\xx| < 1}  |\overline F -\pa_{xy} \overline F(0) \chi | \cdot |\xx|^{-3}
  \les \max_{|\xx| < 1} ( | \na^3  \overline F |  + |\pa_{xy} \overline F(0)| \cdot |\na^3 \chi| )
  \les \max_{|\xx| < 1} ( \sum_{k + l = 3} |\na^k \bar f| \cdot |\na^l \bar g| 
    + |\pa_{xy} \overline F(0)| \cdot |\na^3 \chi| ).
\]

The upper bound is \emph{finite for any} value $|\pa_{xy} \overline F(0)|$. 
The numerical evaluation enters the rigorous estimates by bounding the upper bounds. Using error estimates from numerical analysis, we obtain sharp piecewise bounds for $\na^i \bar f, 
\na^j \bar g, \na^k \chi$.
Using these bounds and the above estimate, we bound $ |\overline F -\pa_{xy} \overline F(0) \chi | \cdot |\xx|^{-3}$ rigorously by some bounded constant.

Away from $\xx=0$, we derive piecewise bounds for $\overline F - \pa_{xy} \overline F(0) \chi$ and then derive the weighted estimates. In this regime, although we still cannot numerically evaluate $ \pa_{xy} \overline F(0)$ exactly, the error is not amplified by the weight significantly because the weight is less singular away from $\xx=0$. By optimizing these two estimates, we obtain sharp piecewise bounds for 
$|\overline F -\pa_{xy} \overline F(0) \chi | \cdot |\xx|^{-3}$. 

An important lesson of the above example is that although numerical evaluation may not produce $\pa_{xy} \overline F(0)$ \emph{exactly}, this does \emph{not} lead to an uncontrolled weighted estimate. The crucial point is that the cancellation uses the exact analytic derivative of the represented function $\overline F$, rather than a raw floating-point approximation. Once the exact vanishing order is enforced analytically, the weighted estimate becomes rigorous and bounded.

\vspace{0.1in}

\paragraph{\bf Example 2: weighted estimates for the error}

In this example, we discuss the weighted estimates for the corrected error $ (\pa_s - \cL) \wh g^{(2)}$ for $\wh g^{(2)}$ defined in \eqref{def:g2}. 
Since $\wh g^{(2)} = \wh g^{(1)} + a(t) \chi_2$ \eqref{def:g2}, using the cubic vanishing order 
\eqref{eq:g_cond} for $\wh g = \wh g^{(2)}$, \eqref{eq:key_weight_est}, 
and triangle inequality, we obtain
\beq\label{eq:tri_split}
\max_{|\xx| \leq 1} |\xx|^{-3} (\pa_s - \cL) \wh g^{(2)} |
\les \max_{|\xx|\leq 1} | \na^3 (\pa_s -\cL) \wh g^{(2)} |
\les \max_{|\xx|\leq 1} ( | \na^3 (\pa_s -\cL) \wh g^{(1)} |
+ |a(t)| \cdot |\na^3 (\pa_s -\cL)  \chi_2  | ).
\eeq

Since $\wh g^{(1)}, \chi_2$ are represented using explicit basis functions, we can bound 
$ | \na^3 (\pa_s -\cL) \wh g^{(1)} | , |\na^3 (\pa_s -\cL)  \chi_2  |$ rigorously. 
For $a(t)$, we bound it using the \emph{analytic} formula \eqref{def:a} to obtain
\[
  |a(t)| \leq  e^{\bar \lam t}  |\pa_{xy} \cE_0(0)|
+ \int_0^t e^{ \bar \lam (t- s) }  |\pa_{xy} \cE(s, 0 )|  d s ,
\quad \bar \lam = \bar c(0 ) - \pa_x \overline V_1(0) - \pa_y \overline V_2(0).
\] 

For the ODE system in \cite[Section 4]{ChenHou2023a} and \cite[Section 3.3]{ChenHou2023b} near $\xx=0$, which corresponds to \eqref{eq:ODE2}, we have $\bar \lam < -5$. 
Thus, by bounding $|\pa_{xy} \cE_0(0)|$ and $ |\pa_{xy} \cE(s, 0 ) |$ for $s$ in the support of 
$\cE(s, 0)$ 
\footnote{
In \cite{ChenHou2023a,ChenHou2023b}, the approximate solution $\wh g^{(0)}, \wh g^{(1)}$ and error 
$\cE(s, 0)$ has temporal support $s \in [0,12]$.
}
and 
 using the exponential decay $e^{\bar \lam s}$ in $s$, we bound the factor $a(t)$ for \emph{any} $t \geq 0$.

While the triangle inequality used in \eqref{eq:tri_split} leads to an overestimate
of $ \na^3 (\pa_s -\cL) \wh g^{(2)}$, since $a(t)$ has a size of the error, 
such an overestimate is minor. Away from $\xx=0$, similarly, we estimate 
\[
| |\xx|^{-3} (\pa_s - \cL) \wh g^{(2)} |
\leq | |\xx|^{-3} (\pa_s - \cL) \wh g^{(1)} |
+ |a(t) | \cdot | |\xx|^{-3} (\pa_s - \cL) \chi_2 | .
\]

By deriving sharp piecewise bounds for $(\pa_s - \cL) \wh g^{(1)}, (\pa_s - \cL) \chi_2 $, we obtain 
another estimate for the weighted error. By optimizing the above two estimates, we obtain a sharp piecewise bound for the weighted error rigorously. 

An important lesson in the above example is that we \emph{do not} need to evaluate 
the coefficient for the correction $a(t)$. Since $a(t)$ is very small, using triangle inequality and the above estimates, we only need to obtain an \emph{upper bound} for $a(t)$. 

For the Boussinesq equations, the related estimates are developed in details in 
\cite[Section 3.5]{ChenHou2023b}.

\vspace{0.1in}
\paragraph{\bf Summary}
We perform \emph{analytic} derivation to ensure suitable vanishing order and enforce \emph{exact} conditions, such as \eqref{eq:g_cond}; we use high order regularity of the numerical solution 
and the \emph{exact} vanishing conditions to control the weighted norm of various terms rigorously. 
Using the above ideas and methods, we achieve robust weighted energy estimates 
around numerically constructed solutions, e.g. $\bar \om, \bar \th$ in \eqref{eq:lin}, and the approximate solution $\wh g$.

\subsection{Generalization}

For the system \eqref{eq:lin}, in  \cite[Sections 4.2, 4.3]{ChenHou2023a},  we perform similar corrections and decompositions for the perturbation near $\xx=0$: $W = (\om, \eta, \xi) = (\om_1, \eta_1, \xi_1) + (\om_2 , \eta_2, \xi_2)$, to obtain cubic vanishing order for $W_1 = (\om_1, \eta_1, \xi_1) $. For example, we approximate $\olin \cF_1(\xx) = \overline F_{\om}(\xx) $ by $ \pa_{xy} \olin \cF_1(0) \chi$ with $\chi = x y + O(|\xx|^3)$ near $\xx =0$; we choose $c_{\om}(f) = \pa_x u(f)(0) = - \pa_{xy}(-\D)^{-1} f$ and decompose the linear nonlocal terms involving $\uu = (u, v), c_{\om}$ in \eqref{eq:linomega} as
\[
  c_{\om}(\om) \bar \om - \uu \cdot \na \bar \om = - ( u - \pa_x u(0) x, v + \pa_x u (0) y ) \cdot \na \bar \om
  + c_{\om}(\om) (\bar \om - x \pa_x \bar \om + y \pa_y \bar \om),
\]
where $\pa_x u(0)= -\pa_y v(0)$ due to divergence free $\na \cdot \uu =0$. For $\om = O(|\xx|^2)$ with $\om \in C^{2,1}$, we obtain
$ ( u - \pa_x u(0) x, v + \pa_x u (0) y ) = O(|\xx|^3)$, and the first part vanishes $O(|\xx|^3)$ near $\xx=0$. See details at the beginning of \cite[Section 4.2]{ChenHou2023a}.

For the nonlinear term $\cN_1(\om,\eta,\xi) = N(\om) $ in \eqref{eq:linomega}, for 
perturbation with regularity: $\om, \eta, \xi = O(|\xx|^2)$, $\om, \na \th \in C^{2, 1}$, and $\om, \eta$ being odd in $x$, $\xi$ being even in $x$, and $|\xi| \les x^2$
\footnote{
$\xi$ is even in $x$ and $ \xi(0, y) = 0, \pa_x \xi(0, y)$ are preserved by the equations.
}
, we obtain
\[
   \cN_1(\om , \eta, \xi) = c_{\om}(\om) \pa_{xy} \om(0) x y + O(|\xx|^3) .
\]

Therefore, by subtracting an operator in the form of $ \pa_{xy} \olin \cF_1(0) \cdot f_1 
+ c_{\om}( \cdot ) f_2 + c_{\om}(\om) \pa_{xy}\om(0) f_3 $ in \eqref{eq:linomega} 
with $f_i$ having appropriate leading order near $\xx=0$ and performing a 
decomposition similar to \eqref{eq:lin_W} and \eqref{eq:lin_Wm}, we obtain the equation for $\om_1$ that preserves $O(|\xx|^3)$ vanishing order near $\xx= 0$. While $c_{\om}(\om) \pa_{xy}\om(0)$ is nonlinear in $\om$, since it is constant in space, we can still apply the Duhamel formula \eqref{eq:W2_form}, \eqref{eq:W2_form_hat} to the source term $c_{\om}(\om) \pa_{xy} \om(0) f_3$. 
We refer to \cite[Section 4.2]{ChenHou2023a} for the detailed decomposition for \eqref{eq:lin}.

\subsection{Remark on local well-posedness}

The decomposition and vanishing order for the perturbation of \eqref{eq:lin} are preserved as long as the equation is locally well-posed with a sufficiently regular solution, e.g.
 $\om, \th \in C^{4,1}$ and $\om_1, \eta_1, \xi_1 \in C^{2,1}$, which is a \emph{qualitative} property. 
 Since we consider sufficiently smooth initial data, the local-in-time regularity of these variables 
 follows from standard local regularity for the Boussinesq equations. 
 Since this paper is mainly focused on the analytic, low-rank correction, we refer to  \cite[Section 4.4]{ChenHou2023a} for more discussion on the regularity. 

 Using Beale-Kato-Majda type continuation criterion, we \emph{only} need to control $\int_0^T \| \na (\uu, \th)(s) \|_{L^{\infty}} ds $ to propagate these  \emph{qualitative} regularity properties. 
By performing stability estimates on $W_1 = (\om_1, \eta_1, \xi_1), c_{\om}(\om)$
and $\pa_{xy}\om(0),  \pa_{xxy}\th(0)$ (via ODEs), 
we obtain smallness estimates of $W_1$ uniform in time, which enables us to control $\wh W_2$ using the solution formula \eqref{eq:W2_form_hat}. These stability estimates further control the continuation criterion. 
Then by a bootstrap argument, we obtain global stability estimates.
Denote $W_1 = (\om_1, \eta_1, \xi_1),\wh W_2= (\hat \om_2, \hat \eta_2, \hat \xi_2)$. In summary, we have the following logical cycle:
 \[
 \bal
    \mbox{(a) \emph{qualitative} local regularity of $\om, \th$} & \Rightarrow 
    \mbox{(b) \emph{qualitative} local regularity of $W_1, \wh W_2$}, \\
   \mbox{(c) \emph{quantitative} weighted stability estimates for $W_1$}
 & \Rightarrow   \mbox{(d) \emph{quantitative} stability estimates for $W_1, \wh W_2$ } \\
  &  \Rightarrow \mbox{(e) continuation criterion on $\om, \th$}     \Rightarrow (a).
\eal
 \]
The key step is to establish (c) \emph{quantitative} weighted stability estimates for $W_1$, which 
are the main results in \cite{ChenHou2023a,ChenHou2023b}.

\section{Numerical velocity construction and analytic correction of the error}
\label{sec:numer_vel}

The model problem \eqref{eq:lin_model} explains the local analytic correction near $0$ for a 
numerically constructed solution.
For the full Boussinesq equation \eqref{eq:lin}, there is another important analytic correction. This is the elliptic counterpart of the space-time correction discussed in Section~\ref{sec:model}. The approximation to the nonlocal velocity field of the profile is constructed numerically through the stream function. The approximation error for the \emph{nonlocal} velocity is 
estimated from the \emph{local} error in solving the Poisson equation via weighted functional inequalities. We employ ideas and methods similar to those in Section \ref{sec:model} to perform low-rank analytic correction for the error to ensure that it belongs to suitable weighted energy spaces.

\paragraph{\bf Step 1. Numerical construction}

In \cite[Section 7]{ChenHou2023a}, the approximate vorticity profile $\bar\omega$ is first
constructed numerically in an explicit basis representation like 
\eqref{eq:ass_f_bar:a}. 
We construct a numerical stream function $\bar \phi^{N,(0)}$ by solving
\[
-\Delta \bar\phi = \bar\omega,
\]
numerically and representing $\bar\phi^{N, (0)}$ using basis functions like \eqref{eq:ass_f_bar:a},
where the superscript \textit{N} indicates numerics. The construction of $\bar \phi^{N, (0)}$ introduces a \emph{local} error 
\[
\bar\varepsilon^{(0)}=\bar\omega-(-\Delta)\bar\phi^{N, (0)}.
\]

\paragraph{\bf Step 2: analytic correction}
Due to numerical error, the error $\bar\varepsilon^{(0)} \neq 0$. Due to the odd symmetry of 
$\bar \om, \bar \phi^{N, (0)}$ in $x$, we obtain 
\[
\bar\varepsilon^{(0)} = \pa_{x} \bar\varepsilon^{(0)}(0) x + O(|\xx|^2), \qquad \text{near } \xx=0.
\]
 The derivative $\pa_x \bar\varepsilon^{(0)}(0)$ at $\xx=0$ is defined 
\emph{analytically} via the basis representation for $ \bar \phi^{N, (0)}$ and $\bar \om$ like 
\eqref{eq:F_cor1_analy}. Similar to \eqref{eq:g_cor1}, we perform an analytic correction to improve vanishing order. We define 
\[
  \chi_3 = - \f{x y^2}{2} \kp_*(x) \kp_*(y), 
\]
for some sufficiently smooth function $\kp_*(x)$ with $\kp_*(y)=  1 +  O(|y|^4)$ and compact support near $y=0$. It satisfies 
\[
 -\D \chi_3 = x + O(|\xx|^4) , \qquad \text{near } \xx=0 .
\] 

Then we perform an analytic correction to $\bar \phi^{N, (0)}$, exactly in the same spirit as the first correction for $\wh g^{(0)}$ in Section~\ref{sec:numer}:
\[
\bar \phi^N = \bar \phi^{N, (0)} + \pa_x \bar\varepsilon^{(0)}(0)  \cdot \chi_3.
\]

The error associated with the corrected numerical stream function $\bar \phi^N$ satisfies
\bseq\label{eq:nonloc_err_cor1}
\beq
\bar \e \teq \bar \om - (-\D \bar \phi^N)   = \bar \e^{(0)} +  \pa_x \bar\varepsilon^{(0)}(0)  
\D \chi_3 
 = \bar \e^{(0)} -  \pa_x \bar\varepsilon^{(0)}(0) x + O(|\xx|^4)
 = O(|\xx|^2),  \   \text{near } \xx=0.
\eeq

Due to the odd symmetry of $\bar \om, \bar \phi^N$ in $x$, we obtain $\pa_x^2 \bar \e(0)
= \pa_y^2 \bar \e(0) = 0$ and 
\beq\label{eq:nonlocal_err_cor1}
 \bar \e = \pa_{xy} \bar \e(0) x y + O(|\xx|^3) , \quad  \text{near } \xx=0.
\eeq
\eseq

We define the associated numerical velocity analytically by applying $\na^{\perp}$ to the basis functions:
\beq\label{eq:u_bar_N}
\bar \uu^N = \nabla^\perp \bar\phi^N.
\eeq

\paragraph{\bf Weighted functional inequalities}

Due to the error $\bar \e$, the velocity $\bar \uu^N$ constructed numerically differs from the 
exact velocity $\uu(\bar \om) = \na^{\perp}(-\D)^{-1} \bar \om$. To control the nonlocal error  rigorously, we first rewrite the error analytically
\[
 \uu(\bar \om ) - \bar \uu^N = \uu(\bar \om - (-\D) \bar \phi^N )
 = \uu( \bar \e).
\]

In \cite{ChenHou2023a}, \cite[Section 4]{ChenHou2023b}, 
we establish weighted functional inequalities.  
One of these inequalities takes the following form 
\footnote{
In \cite[Section 4.3]{ChenHou2023a}, we approximate $u(f)$ by a finite rank operator 
$\hat u(f)$, which is bounded for $\| f \vp \|_{L^{\infty}} < \infty$. To explain the idea,
we simplify the operator $\hat u(f)$ by $\pa_x u(f)(0) x$ in \eqref{eq:non_ineq}.
}
\beq\label{eq:non_ineq}
  | ( u(f)- \pa_x u(f)(0) \cdot x ) \rho_{10} | \leq C(|\xx|) \| f \vp \|_{L^{\infty}} ,
\eeq
where $u$ is the first component of vector $\uu$, for some bounded coefficients $C(|\xx|)$, with weight 
\[
 \rho_{10} \sim |\xx|^{-3}, \quad \vp \sim |\xx|^{-1/2} |\xx|^{-2}, \quad \text{near } \xx = 0.
\]

\paragraph{\bf Step 3: analytic decomposition of the error}

To apply the weighted estimates \eqref{eq:non_ineq}, we require the input $f$ to be odd in $x$ and to vanish like $O(|\xx|^{5/2})$ or better near $\xx=0$. Yet, the error $\bar \e$ fails to have the vanishing order required by the weighted norm. Therefore, we perform another finite-rank analytic decomposition of $\bar \e$. The purpose of this decomposition is to separate the unique mixed quadratic defect mode from the remainder that will satisfy the improved vanishing order. 

In \cite[Section 5.8]{ChenHou2023a}, \cite[Section 3]{ChenHou2023b}, we treat both the steady-state
stream-function error $\bar\varepsilon$ \eqref{eq:nonloc_err_cor1} and the analogous space-time error
$\hat\varepsilon$ in solving \eqref{eq:lin_g} in the same way. For an error $\varepsilon=\bar\varepsilon$ or $\hat\varepsilon$, one chooses a cutoff
function $\chi_\varepsilon$ satisfying
\beq\label{def:chi_e}
\chi_\varepsilon = 1 + O(|\xx|^4)
\qquad \text{near } \xx=0,
\eeq
and decomposes
\[
\varepsilon = \varepsilon_1 + \varepsilon_2.
\]
The low-rank correction term is chosen explicitly so that $\varepsilon_2$ captures the unique mixed quadratic defect mode, and we can derive the  velocity field explicitly and analytically, while $\varepsilon_1$ is the remainder with improved vanishing order. Concretely, one chooses
\footnote{
We note a typographical error in the basis function for $\e_2$ in an early version of \cite[Section 5.8]{ChenHou2023a}, where it is written as $\e_2=\pa_{xy}\e(0)\,\D\! (\f{x^3 y}{2}\chi_{\e} )$. The correct definition is given in \eqref{def:e2} and is the one used in the computer-assisted proof. In particular, this error has no effect on the validity of the proof.
}
\beq\label{def:e2}
\varepsilon_2
= \pa_{xy} \varepsilon(0) \cdot (-\Delta)\! \big( - \frac16 x y^3\,\chi_\varepsilon \big),
\eeq
where $\chi_{\e}$ is defined in \eqref{def:chi_e}. Due to the leading order behavior of $\chi_{\e}$ \eqref{def:chi_e}, we have 
\[
  \e_2 = \pa_{xy} \e(0) \cdot \D( \f{1}{6} x y^3 ) + O(|\xx|^4)
 =  \pa_{xy} \e(0)  \cdot x y +  O(|\xx|^4) ,  \qquad \text{near } \xx=0.
\]

Let us discuss the vanishing order of $\e_1$. 
Due to the odd symmetry in $x$, the total error $\e$ 
satisfies a Taylor expansion:
\[
 \e = \pa_{xy} \e(0) x y + O(|\xx|^3) ,  \qquad \text{near } \xx=0 .
\]

Thus, combining the above two estimates, we derive 
\[
 \e_1 = \e - \e_2 =   \e - \pa_{xy} \e(0) \cdot \D \big( \frac16 x y^3\,\chi_\varepsilon \big)
 = O(|\xx|^3), 
\qquad \text{near } \xx=0 .
\]

Using the structure \eqref{def:e2}, we derive the velocity $\uu(\e_2)$ \emph{analytically} 
\beq\label{def:ue2}
\uu(\e_2) = \na^{\perp}(-\D)^{-1} \e_2 = \pa_{xy} \e(0) \cdot \na^{\perp} \big( - \tfrac16 x y^3\,\chi_\varepsilon \big) = O(|\xx|^3) ,
 \quad \text{near } \xx=0 , \  \na \uu(\e_2)(0) = 0 .
\eeq
Due to the cubic vanishing, $\uu(\e_2)$ belongs to the weighted energy space for the velocity in our energy estimates. We decompose the velocity error $\uu(\e) = \na^{\perp}(-\D)^{-1} \e$ as
\[
\uu(\e) = \uu(\e_1) + \uu(\e_2)
= \uu(\e_1) + \pa_{xy} \e(0) \cdot \na^{\perp} \big( - \tfrac16 x y^3\,\chi_\varepsilon \big).
\]

The corrected residual $\varepsilon_1$ now has the cubic vanishing order required for the weighted
estimates in \eqref{eq:non_ineq}. We obtain piecewise weighted bounds for $\e_1 \vp$ and apply functional inequality \eqref{eq:non_ineq} to bound $u(\e_1) - \pa_x u(\e_1)(0) x$. For $ \uu(\e_2)$, we bound it directly using its local structure \eqref{def:ue2} and piecewise bounds for $ \chi_\varepsilon$. 
Since $\na \uu(\e_2)(0) = 0$ \eqref{def:ue2}, we obtain $ \pa_x u(\e_1)(0) = \pa_x u(\e)(0)$. We estimate $\pa_x u(\e)(0) = \pa_{xy} (-\D)^{-1}( \e)(0)$ by bounding the integral directly.

\medskip
\noindent
\textbf{Vanishing order for nonlocal error.}
The key points for the rigorous low-rank correction near the origin for the 
numerical stream-function are as follows.

\begin{itemize}[leftmargin=2.5em]
\item The residual $\varepsilon$ is an analytically defined function with sufficient regularity once the
      numerical stream function $\bar\phi^{N, 0}$ is fixed and represented using basis functions like \eqref{eq:ass_f_bar:a}.
\item The coefficient $\varepsilon_{xy}(0)$ is the exact analytic mixed
      derivative of that residual at the origin.
\item The correction term $\varepsilon_2$ is built from an explicit analytic
      expression whose velocity field is derived \emph{analytically}.

\item  The remainder $\varepsilon_1$ is an analytically defined function
with improved vanishing order, and therefore belongs to the weighted
space required for inequalities such as \eqref{eq:non_ineq}.

\end{itemize}

This is why the additional approximation in the velocity construction does not 
destroy the vanishing conditions near the origin.  It is handled by the same
analytic low-rank correction principle as in the toy model.

\section{Conclusion}

In this review, we revisited the analytic finite-rank correction method in the computer-assisted stability analysis developed in \cite{ChenHou2023a,ChenHou2023b}. Our main goal was to isolate one of the key structural features of the analysis: the interplay between numerically constructed functions, represented in explicit basis functions, and exact local analytic constraints imposed by singularly weighted estimates.

We applied this analytic low-rank correction both to the construction of approximate space-time solutions in the quantitative finite-rank perturbation method (see Section \ref{sec:finite_rank}) and to the construction of the stream function together with the estimation of its residual error. In each case, numerical approximations may achieve very small global error with small derivatives, but they do not automatically satisfy the exact local vanishing conditions required by the singularly weighted estimates. The analytic correction serves precisely to remove this mismatch.

The key point is that the numerical step \emph{only} determines coefficients in explicit basis representations. Once these coefficients are fixed, they determine globally defined functions on the entire domain. The subsequent low-rank corrections are then carried out \emph{analytically} on these functions. 
We applied the same analytic principle throughout the numerical construction of the approximate profile, the approximate space-time solutions, and the approximate stream function:
\begin{itemize}[leftmargin=2.5em]
\item numerical coefficients determine explicit globally defined functions with sufficient regularity;
\item one identifies the remaining bad low-order mode near the origin via Taylor expansions; 
\item one introduces an explicit finite-rank correction analytically;
\item the corrected function satisfies the desired vanishing condition exactly;
\item one estimates the corrected function or its error in the singularly weighted norm.
\end{itemize}
In particular, the improved vanishing order becomes an \emph{exact} analytic property of the corrected function, even though the coefficients of the approximate functions are obtained numerically. This is the basic principle behind the decomposition of the perturbation near the origin in \cite{ChenHou2023a,ChenHou2023b}.

This review highlights this aspect of the proof strategy and makes the underlying idea transparent for future applications. Similar combinations of explicit global approximations and exact local corrections may also prove useful in other computer-assisted proofs of singularity formation and nonlinear stability problems for fluid equations and related PDEs.

\vspace{0.2in}
\noindent
{\bf Acknowledgments.} 
The research of J. Chen was in part supported by the NSF Grant DMS--2408098. The research of T. Y. Hou was in part supported by the NSF grants DMS-2205590, DMS-2508463, the Choi Family Gift Fund and the Dr. Mike Yan Gift Fund.

\bibliographystyle{plain}
\bibliography{selfsimilar}

\end{document}